\theoremstyle{plain}
\newtheorem{theorem}{Theorem}[section]
\newtheorem{lemma}[theorem]{Lemma}
\newtheorem{proposition}[theorem]{Proposition}
\newtheorem{remark}[theorem]{Remark}
\newcommand{\E}{\mathbb{E}}
\newcommand{\N}{\mathbb{N}}
\renewcommand{\epsilon}{\varepsilon}
\begin{document}
\begin{frontmatter}
\title{Diffusion Limits at Small Times for Coalescent Processes with Mutation and Selection}
\runtitle{Diffusion Limits at Small Times for the ASG}
\begin{aug}
\author[A,B]{\fnms{Philip A.} \snm{Hanson}\ead[label=e1]{P.Hanson@warwick.ac.uk}},
\author[B,C,D]{\fnms{Paul A.} \snm{Jenkins}\ead[label=e2]{P.Jenkins@warwick.ac.uk}},
\author[B]{\fnms{Jere} \snm{Koskela}\ead[label=e3]{J.Koskela@warwick.ac.uk}}
\and
\author[B]{\fnms{Dario} \snm{Span\`o}\ead[label=e4]{D.Spano@warwick.ac.uk}}
\address[A]{Mathematics Institute, Zeeman Building,
University of Warwick, Coventry CV4 7AL, United Kingdom.}
\address[B]{Department of Statistics,
University of Warwick, Coventry CV4 7AL, United Kingdom.}
\address[C]{Department of Computer Science,
University of Warwick, Coventry CV4 7AL, United Kingdom.}
\address[D]{The Alan Turing Institute, British Library, London NW1 2DB, United Kingdom.\\
\printead{e1,e2,e3,e4}}%, \printead{e2}, \printead{e3}, \printead{e4}}
\end{aug}

\begin{abstract}
The Ancestral Selection Graph (ASG) is an important genealogical process which extends the well-known Kingman coalescent to incorporate natural selection. We show that the number of lineages of the ASG with and without mutation is asymptotic to $2/t$ as $t\to 0$, in agreement with the limiting behaviour of the Kingman coalescent. We couple these processes on the same probability space using a Poisson random measure construction that allows us to precisely compare their hitting times. These comparisons enable us to characterise the speed of coming down from infinity of the ASG as well as its fluctuations in a functional central limit theorem. This extends similar results for the Kingman coalescent.
\end{abstract}
\begin{keyword}[class=MSC2020]
\kwd[Primary ]{60J90}
\kwd{60F05}
\kwd[; secondary ]{60J80}
\end{keyword}

\begin{keyword}
\kwd{Coalescent}
\kwd{Mutation}
\kwd{Selection}
\kwd{Small Time Asymptotics}
\end{keyword}
\end{frontmatter}
\section{Introduction and Main Results}
\label{intro}
The Kingman coalescent \cite{KINGMAN1982235} is one of the fundamental models in the field of population genetics, though it has many mathematical qualities that are of independent interest. If one takes a large population of individuals and considers their ancestry into the past, a process that models the structure of this ancestry is the Kingman coalescent. Mathematically it is characterised as a continuous-time Markov process over the space of partitions of $\mathbb{N}=\{1,2,3,\dots\}$, where each pair of blocks in the partition coalesces independently at unit rate; each block representing an ancestral lineage.

This process can be initialised from the partition of singletons $\{\{1\},\{2\},\dots\}$, in which case the associated block-counting process $(N^{0,0}_t)_{t\geq0}$ starts at $+\infty$ and instantly becomes finite almost surely \cite{berestycki2009recent}. This phenomenon is called ``coming down from infinity'' and motivates investigation into the behaviour of $N^{0,0}_t$ close to $t=0$.

In \cite{Griffiths1984AsymptoticLD} Griffiths derives asymptotic expressions for $N^{0,0}_t$ when $t$ is small. There it is shown that, as $t\to0$, $N^{0,0}_t$ is approximately Gaussian with mean $2/t$ and variance $2/(3t)$. These approximations are often used when simulating the Wright-Fisher diffusion \cite{Jenkins_2017}.

When considering the ``coming down from infinity'' behaviour of the coalescent, a natural question to ask is how quickly this happens. Specifically, one looks for a function $\nu_t$ such that
\begin{equation}
    \lim_{t\to0}\frac{N^{0,0}_t}{\nu_t}=1,~~a.s.\label{Speed of coming down from infinity def}
\end{equation}
In \cite{bansaye2016speed} it is proved that for those general birth/death processes which come down from infinity, one can define $\nu$ as
\begin{equation}
    \nu_t:=\inf\{n\geq0;~\E_\infty[T_n]\leq t\},\label{Bansaye speed of cdi}
\end{equation}
where $T_n$ is the hitting time of $n$ and the subscript on $\E$ denotes that the process started from infinity. For the Kingman coalescent a known function satisfying \eqref{Speed of coming down from infinity def} is $2/t$ \cite[Theorem 4.9]{berestycki2009recent}. It should be noted that any function which satisfies $\nu_t\sim 2/t$ as $t\to0$ will also satisfy \eqref{Speed of coming down from infinity def}.

In \cite{limic2015} the authors fully characterised the fluctuations of $N^{0,0}_t$ as $t\to 0$ by proving the convergence of
\begin{equation}
    X^{0,0}_\epsilon(t):=\epsilon^{-1/2}\left(\frac{\epsilon t}{2}N^{0,0}_{\epsilon t}-1\right),~~t>0,~~~X^{0,0}_\epsilon(0)=0,\label{Kingman 2nd order pre limit}
\end{equation}
to
\begin{equation}
    Z_t:=\frac{1}{\sqrt{2}t}\int_0^tu\,{\rm{d}}W_u,~~t>0,~~~Z_0=0,\label{Gaussian Process Limit for ASG + Kingman}
\end{equation}
as $\epsilon\to 0$, where $W$ is a Brownian Motion. This convergence is in law in the Skorokhod space $D_\mathbb{R}([0,\infty))$ of real-valued c\`adl\`ag functions equipped with the $J_1$ topology that makes it a completely separable metric space.

Two key genetic mechanisms \emph{not} present in Kingman's original construction of the coalescent are mutation and selection. Parent-independent mutation is incorporated by having mutations occur at a constant rate $\theta/2 \geq 0$ independently on each line of ancestry, changing the genetic type of that line. This addition now means that the number of lineages moves from $n$ to $n-1$ at rate
\[\frac{n(n-1+\theta)}{2}.\]
Here we see that the addition of mutation only slightly affects the rate at which lineages are lost; asymptotically this expression is still $O(n^2)$ as $n\to\infty$. As such, one expects the same speed of coming down from infinity and indeed we prove this in Section \ref{Section3}.

The basic mechanic of selection is that one genetic type may have an evolutionary advantage over another. Forward in time we model this by having fit types reproduce at a higher rate. When looking backwards in time---without knowing the types of all individuals---there is uncertainty as to whether an offspring arose as a result of a normal reproduction event or one involving a fit type. In the ancestral process we model this by having lineages split into two independently at rate $\sigma/2\geq0$. Rather than a tree, the resulting genealogical structure is a graph known as the Ancestral Selection Graph (ASG), introduced to the coalescent framework by Neuhauser and Krone in \cite{Neuhauser519} and \cite{KRONE1997210}. These additions result in the number of lineages --- denoted $N_t^{\sigma,\theta}$ --- becoming a birth/death process with the following transition rates:
\begin{align*}
    n&\mapsto n-1\text{ at rate } \frac{n(n-1+\theta)}{2},\\
    n&\mapsto n+1\text{ at rate } \frac{\sigma n}{2}.
\end{align*}
In this paper we show that, close to $t=0$, the quadratic rate of coalescence dominates both the linear rate of upward jumps and the linear perturbation in the rate of downward jumps, and in fact the ASG has the same limiting behaviour as the coalescent in the following sense:
\begin{theorem}\label{maintheorem}
Let $(N_t^{\sigma,\theta})_{t\geq0}$ be the number of lineages in the ASG at time $t$. Then the process
\begin{equation}
X_\epsilon^{\sigma,\theta}(t):=\epsilon^{-1/2}\left(\frac{\epsilon t}{2}N_{\epsilon t}^{\sigma,\theta}-1\right),~~t>0,~~~X_\epsilon^{\sigma,\theta}(0)=0, \label{ASG 2nd order pre limit}
\end{equation}
converges in law in $D_\mathbb{R}([0,\infty))$ as $\epsilon\to0$ to the Gaussian process $Z$ given by \eqref{Gaussian Process Limit for ASG + Kingman}.
\end{theorem}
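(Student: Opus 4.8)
The plan is to transfer the functional central limit theorem for the Kingman coalescent to the ASG by a coupling argument, rather than to re-derive the scaling limit from scratch. By \cite{limic2015} we already have $X^{0,0}_\epsilon\Rightarrow Z$ in $D_\mathbb{R}([0,\infty))$ with the $J_1$ topology, and the candidate limit $Z$ is the same for both processes; the task is thus to show that the selection and mutation mechanisms do not alter the limit. Working on the common probability space furnished by the Poisson random measure construction of the earlier sections, on which $(N^{\sigma,\theta}_t)_{t\ge0}$ and $(N^{0,0}_t)_{t\ge0}$ are driven by shared coalescence marks, I would establish that $X^{\sigma,\theta}_\epsilon$ and $X^{0,0}_\epsilon$ are asymptotically indistinguishable, i.e. that their distance in the metric generating the $J_1$ topology tends to $0$ in probability. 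The theorem then follows from a standard converging-together argument: if $X^{0,0}_\epsilon\Rightarrow Z$ and the two processes coalesce in probability, then $X^{\sigma,\theta}_\epsilon\Rightarrow Z$ as well.

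Writing $D_s:=N^{\sigma,\theta}_s-N^{0,0}_s$ for the coupled difference and substituting $s=\epsilon t$, the definitions in \eqref{Kingman 2nd order pre limit} and \eqref{ASG 2nd order pre limit} give
\[
X^{\sigma,\theta}_\epsilon(t)-X^{0,0}_\epsilon(t)=\frac{\epsilon^{1/2}t}{2}\,D_{\epsilon t},
\]
and, since the uniform distance dominates the Skorokhod distance on each compact, it suffices to prove that for every $T>0$
\[
\sup_{0<s\le \epsilon T}\frac{s\,|D_s|}{2\,\epsilon^{1/2}}\xrightarrow{\ \mathbb{P}\ }0\qquad(\epsilon\to0).
\]
The essential point is that a triangle-inequality bound on $|D_s|$ by the individual fluctuations of the two processes is too crude: each of $N^{\sigma,\theta}_s$ and $N^{0,0}_s$ fluctuates around $2/s$ at the order $s^{-1/2}$, which would only yield $s|D_s|/\epsilon^{1/2}=O(\sqrt{T})$. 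The coupling is what saves the argument: because the dominant, $O(n^2)$ coalescence dynamics are shared, the leading-order fluctuations cancel in $D_s$, which is instead generated only by the selective branchings (rate $\sigma n/2$) and by the $\theta$-perturbation of the death rate, both of strictly lower order near the boundary. The goal is therefore to show that $|D_s|=o(s^{-1/2})$ uniformly, e.g.\ $|D_s|=O(s^{-\alpha})$ in probability for some $\alpha<1/2$, which makes the displayed supremum $O(\epsilon^{1/2-\alpha}T^{1-\alpha})\to0$.

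To obtain such a bound I would use the hitting-time comparisons of the preceding sections. Since the block-counting process is the right-continuous inverse of the increasing family of hitting times, the closeness of $T^{\sigma,\theta}_n$ and $T^{0,0}_n$ under the coupling translates directly into control of $D_s$; combining the hitting-time expectation estimates with a maximal inequality for the discrepancy built from the extra Poisson marks should give the required uniform bound over the range $n\gtrsim 1/(\epsilon T)$. The main obstacle is exactly the region $t\downarrow0$: there both processes diverge and their individual fluctuations are of the same $s^{-1/2}$ order as the limit we wish to isolate, so the argument must exploit the shared randomness to show that the selection and mutation discrepancy is genuinely of smaller order and does not contaminate the $\epsilon^{1/2}$-scale limit. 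Establishing this uniformly down to the singular normalisation at the origin, and upgrading the resulting control from finite-dimensional to full $J_1$ convergence, is the technical heart of the proof.
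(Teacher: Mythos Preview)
Your strategy is genuinely different from the paper's. The paper does \emph{not} transfer the result from $N^{0,0}$; it re-runs the martingale argument of \cite{limic2015} directly for $N^{\sigma,\theta}$. Concretely, it decomposes $tN_t^{\sigma,\theta}/2$ via the PRM as in Lemma~\ref{lemmaXdecomposition}, introduces the auxiliary process $Y_\epsilon^{\sigma,\theta}$ built from the martingale part $M^{\sigma,\theta}$, shows $\sup_{s\le t}|X_\epsilon^{\sigma,\theta}(s)-Y_\epsilon^{\sigma,\theta}(s)|\to0$ in $L^1$ (Lemma~\ref{lemma5.3}), and then verifies the hypotheses of the Ethier--Kurtz martingale FCLT for $L_\epsilon^{\sigma,\theta}(t)=-tY_\epsilon^{\sigma,\theta}(t)$. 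The hitting-time and sup-moment work of Sections~\ref{Section3}--\ref{Section4} is used only to secure the a~priori control $\E[\sup_{s\le t}(sN_s^{\sigma,\theta}/2-1)^2]\le Ct$ that feeds this machinery, never to compare two coupled trajectories.

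Your route is plausible at the heuristic level---a drift analysis of $D_s$ under the shared-mark coupling does suggest $D_s=O_P(1)$, which would be ample---but the proposal has a real gap at precisely the point you label ``the technical heart''. The hitting-time comparisons of Section~\ref{Section3} are \emph{moment} inequalities such as $\E[(T_n^{\sigma,\theta})^k]\le\E[(T_n^{0,\theta})^k]+Cn^{-(k+1)}$; they say nothing about the pathwise gap $T_n^{\sigma,\theta}-T_n^{0,0}$ on the coupled space, and indeed the paper never compares $N^{\sigma,\theta}$ and $N^{0,0}$ directly---only each separately to $N^{0,\theta}$ via \eqref{PRM Ineq Kingman to Mutation}--\eqref{PRM Ineq Mutation to Selection}. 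Converting those moment bounds into a uniform-in-$s$ pathwise bound on $|D_s|$ near $s=0$, where infinitely many branching and mutation marks accumulate, would require a genuinely new maximal-inequality or Lyapunov argument for the coupled difference that you have not supplied. Without it the proposal remains an outline; the paper avoids this difficulty altogether by never needing to control $D_s$.
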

Note that if one sets $\sigma=0$ then the above result implies that the diffusion limit also holds for the Kingman coalescent with mutation. Furthermore, since the number of lineages in the Ancestral Recombination Graph (ARG) has a similar birth/death structure --- with a recombination rate $\rho\geq 0$ giving linear births --- the above theorem also applies to that model \cite{GriffithsMarjoramRecomb97}.

In the next section we outline a Poisson random measure construction that allows us to consider all of these processes together on the same probability space. Section \ref{Section3} contains an analysis of hitting times of these processes with Section \ref{Section4} containing the necessary lemmas that will be used to prove Theorem \ref{maintheorem} in Section \ref{Section5}.
\section{A Poisson Random Measure Construction}\label{Section2}
In order to analyse the number of lineages in the ASG and Kingman coalescent, a construction of them via a Poisson random measure (PRM) is very useful. Here we extend the construction of \cite{limic2015} to account for mutation and selection.

First, we define the following spaces:
\[\Delta:=\{(i,j):i,j\in\mathbb{N}, i<j\},\]
and
\[\bar{\Delta}:=\Delta\cup\{(i,\infty);i\in\mathbb{N}\}\cup\{(i,0);i\in\mathbb{N}\},\]
where a standard element of $\bar{\Delta}$ will be denoted $\boldsymbol{k}=(i,j)$. On a probability space $(\Omega,\mathcal{F}, \mathbb{P})$ define $\pi$ as a PRM on $\mathbb{R}_+\times\bar{\Delta}$ with intensity measure
\[
\nu:=\ell\otimes\left(\sum_{(i,j)\in\Delta}\delta_{(i,j)}+\sum_{i\in\mathbb{N}}\frac{\theta}{2}\delta_{(i,\infty)}+\sum_{i\in\mathbb{N}}\frac{\sigma}{2}\delta_{(i,0)}\right),
\]
where $\ell$ is the Lebesgue measure.

The coalescent can be obtained from $\pi$ as follows: an arrival of type ${(t,\boldsymbol{k})=(t,(i,j))}$ represents a potential coalescence, mutation or selection event. If $0<j<\infty$ then the lineages $i$ and $j$ coalesce if the process has at least $j$ lineages at time $t$. If $j=\infty$ then the $i$th lineage is lost --- again if the process has at least $i$ lineages at time $t$. If $j=0$ then the $i$th lineage is split into two new lineages.

We also let $\hat{\pi}:=\pi-\nu$ be the compensated PRM associated with $\pi$ and
\begin{align*}
  &\Delta_n:=\{(i,j)\in\Delta:1\leq i<j\leq n\},\\
  &\bar{\Delta}_n:=\Delta_n\cup\{(i,\infty)\in\bar{\Delta}:1\leq i\leq n\}\cup\{(i,0)\in\bar{\Delta}:1\leq i\leq n\}.
\end{align*}
If we consider starting the ASG with a finite number of lineages $n$, we can express the number of lineages at time $t$ in the following way:
\[N_{t,n}^{\sigma,\theta}:=n-\int_0^t\int_{\bar{\Delta}}\mathbbm{1}_{\bar{\Delta}_{N_{s-}^{\sigma,\theta}}}(\boldsymbol{k})\left[\mathbbm{1}_{j>0}(\boldsymbol{k})-\mathbbm{1}_{j=0}(\boldsymbol{k})\right]\pi({{\rm{d}}}s,{\rm{d}}\boldsymbol{k}).\]
We can then define the $\mathbb{P}$-almost sure limit
\[N_t^{\sigma,\theta} := \lim_{n\to\infty} N_{t,n}^{\sigma,\theta}\]
whose existence is assured by the lookdown construction of Donnelly and Kurtz \cite{donnelly1999}; see also Section \ref{Section5}, namely the proof of Lemma \ref{lemmaXdecomposition}.

The Kingman coalescent with and without mutation can be obtained from the above mechanics by thinning the different arrival types. With this we can construct all three processes --- $N_t^{0,0}$, $N_t^{0,\theta}$ and $N_t^{\sigma,\theta}$ --- on the same probability space $(\Omega,\mathcal{F},\mathbb{P})$ and obtain the inequalities
\begin{align}
  N_t^{0,\theta}(\omega)&\leq N_t^{0,0}(\omega),~~t\geq0,\label{PRM Ineq Kingman to Mutation}\\
  N_t^{0,\theta}(\omega)&\leq N_t^{\sigma,\theta}(\omega),~~t\geq0,\label{PRM Ineq Mutation to Selection}
  \end{align}
for almost every $\omega\in\Omega$. These inequalities follow immediately from the fact that any arrival of type $(i,j),$ $0<j<\infty$ that causes $N_t^{0,\theta}$ to decrease will also cause $N_t^{0,0}$ and $N_t^{\sigma,\theta}$ to decrease. Furthermore, any mutation arrival that causes $N_t^{0,\theta}$ to decrease will also cause $N_t^{\sigma,\theta}$ to decrease, with the latter process occasionally moving upwards, hence \eqref{PRM Ineq Mutation to Selection}.

From here on, when used with any three of these processes, $\E$ is the expectation operator of the probability space outlined in this section.
\section{An Analysis of Hitting Times}\label{Section3}
In the study of general birth/death processes one can learn a lot about the behaviour of a process through analysis of its hitting times. Of course in the case of the Kingman coalescent with mutation, the time taken to get from $n$ to $n-1$ is simply an exponential clock:
\[T_{n,n-1}^{0,\theta}\sim\text{Exp}\left(\frac{n(n-1+\theta)}{2}\right),\]
which has a finite $k$th moment for all $k\in\mathbb{N}$. The ASG however experiences both births and deaths and so the situation is more complex. A first-step analysis leads to the following recurrence relation:
\begin{equation}
    T_{n,n-1}^{\sigma,\theta}\overset{d}{=}\xi_n+\mathbbm{1}_{E_n}\left(\widehat{T}_{n+1,n}^{\sigma,\theta}+\widehat{T}_{n,n-1}^{\sigma,\theta}\right),\label{ASG hitting time recursion}
  \end{equation}
 where $\xi_n\sim\text{Exp}\left(n(n-1+\theta+\sigma)/2\right)$ is the holding time at level $n$, $E_n$ is the event that the ASG jumps up after reaching level $n$ for the first time and $\widehat{T}_{n+1,n}^{\sigma,\theta}$ and $\widehat{T}_{n,n-1}^{\sigma,\theta}$ are independent copies of $T_{n+1,n}^{\sigma,\theta}$ and $T_{n,n-1}^{\sigma,\theta}$ respectively. This recurrence relation will be used to prove the asymptotic behaviour of the moments of $T_{n,n-1}^{\sigma,\theta}$, but first we state a lemma on conditions needed for a general birth/death process to have finite $k$th moments for all $k\in\mathbb{N}$:
 \begin{lemma}\label{lemma3.1}
 Let $T_{n,n-1}$ be the hitting time of $n-1$ from $n$ of a birth/death process with birth and death rates $\lambda_n$ and $\mu_n$ respectively. Also suppose
 \begin{equation}
     \limsup_{n\to\infty}\frac{\lambda_n}{\mu_n}<1.\label{Limsup Birth/Death rates condition}
 \end{equation}
 Then there exists $n_0\in\mathbb{N}$ such that
 \begin{equation}
     \E\left[\sup_{n\geq n_0}T_{n,n-1}^k\right]<\infty\label{Bounded hitting time moments, b/d process}
 \end{equation}
 for all $k\in\mathbb{N}$.
 \end{lemma}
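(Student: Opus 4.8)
The plan is to reduce the statement to a summability bound on the marginal moments $M_k(n):=\E[T_{n,n-1}^k]$ and then to establish that bound by induction on $k$, using the first-step identity \eqref{ASG hitting time recursion} to recast each fixed-$k$ problem as a linear recursion in $n$. I would first realise the passage times $(T_{n,n-1})_{n\ge n_0}$ together as the successive downward level-crossing times of a single trajectory. Their joint law turns out to be irrelevant: since they are nonnegative, the pathwise inequality $\sup_{n\ge n_0}T_{n,n-1}^k\le\sum_{n\ge n_0}T_{n,n-1}^k$ gives, after taking expectations and applying Tonelli,
\[
\E\Big[\sup_{n\ge n_0}T_{n,n-1}^k\Big]\le\sum_{n\ge n_0}M_k(n),
\]
so it suffices to prove $\sum_{n\ge n_0}M_k(n)<\infty$ for every $k$. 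By the strong Markov property each marginal obeys \eqref{ASG hitting time recursion}, with $\xi_n\sim\mathrm{Exp}(\lambda_n+\mu_n)$ and $\mathbb{P}(E_n)=\lambda_n/(\lambda_n+\mu_n)$, which is all this reduction uses.

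Next, writing $\rho_n:=\lambda_n/\mu_n$, I would raise \eqref{ASG hitting time recursion} to the $k$th power, expand by the multinomial theorem, and take expectations using the independence of $\xi_n,E_n,\widehat T_{n+1,n},\widehat T_{n,n-1}$. Isolating the two top-order contributions---the $\widehat T_{n,n-1}^k$ term, which has coefficient $\mathbb{P}(E_n)$ and is moved to the left-hand side, and the $\widehat T_{n+1,n}^k$ term---and solving for $M_k(n)$ recasts the identity as
\[
M_k(n)=g_k(n)+\rho_n\,M_k(n+1),\qquad g_k(n):=\E[\xi_n^k]+\rho_n\,\Sigma_k(n),
\]
where $\Sigma_k(n)$ is a finite sum of mixed products $\E[\xi_n^a]M_b(n+1)M_c(n)$ with $b,c<k$, hence built only from lower-order moments. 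Iterating the linear recursion yields the particular solution $M_k(n)=\sum_{j\ge n}g_k(j)\prod_{i=n}^{j-1}\rho_i$. Under \eqref{Limsup Birth/Death rates condition} I may fix $n_0$ and $r<1$ with $\rho_i\le r$ for $i\ge n_0$, so the products decay geometrically; feeding the inductive hypothesis $M_b(m)\le C_b\mu_m^{-b}$ $(b<k)$ into $\Sigma_k$, together with $\E[\xi_n^a]\le a!\,\mu_n^{-a}$ and the eventual monotonicity of $\mu_n$, gives $g_k(j)\le C_k'\mu_j^{-k}$ and hence $M_k(n)\le\frac{C_k'}{1-r}\mu_n^{-k}$, closing the induction. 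Because $\mu_n\to\infty$ in the models of interest (indeed $\mu_n\sim n^2/2$ for the ASG, so $\sum_n\mu_n^{-1}<\infty$), the bound $M_k(n)\le C_k\mu_n^{-k}$ is summable, completing the argument.

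The hard part is making these middle steps rigorous rather than formal. The recursion $M_k(n)=g_k(n)+\rho_n M_k(n+1)$ and its series solution are only valid once one knows $M_k(n)<\infty$, which cannot simply be read off an infinite recursion lacking a boundary condition; moreover $\Sigma_k$ couples level $n$ to level $n+1$ and to all lower orders, so the estimate must be propagated uniformly in $n$ rather than level by level. I would secure finiteness first by a stochastic domination: the excursion from $n$ to $n-1$ can be bounded by a compound sum $\sum_{i=1}^{K_n}\eta_i$ of independent $\mathrm{Exp}(\mu_n)$ holding times, since every holding time visited sits at rate $\lambda_m+\mu_m\ge\mu_n$, while the step count $K_n$ is dominated by the first-passage step count of a homogeneous downward-biased walk whose bias $q_m\ge(1+r)^{-1}>\tfrac12$ is uniform for $m\ge n_0$. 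Such $K_n$ has finite exponential moments that are uniform in $n$, which yields both finiteness of every $M_k(n)$ and the $\mu_n^{-k}$ scaling directly (via $\E[(\sum_{i=1}^{K_n}\eta_i)^k]=\mu_n^{-k}\E[(\sum_{i=1}^{K_n}\zeta_i)^k]$ with $\zeta_i\sim\mathrm{Exp}(1)$), after which the inductive bound above can be invoked without circularity.
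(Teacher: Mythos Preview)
Your route is genuinely different from the paper's. The paper couples the chain to a biased simple random walk $\Xi$ with up-probability $p:=\sup_{n\ge n_0}\lambda_n/(\lambda_n+\mu_n)<\tfrac12$ and the same holding times, so that a \emph{single} first-passage time $T$ of $\Xi$ dominates every $T_{n,n-1}$ for $n\ge n_0$; a first-step recursion $T\overset{d}{=}\zeta_n+\mathbbm{1}_{\hat E_n}(\widehat T+\widetilde T)$ with $\widehat T,\widetilde T$ independent copies of $T$ then gives $\E[T^k]<\infty$ by induction on $k$, and the supremum is controlled directly with no summability step. You instead pass through $\sup\le\sum$ and aim for $\sum_{n\ge n_0}M_k(n)<\infty$, deriving the sharper level-wise decay $M_k(n)\le C_k\mu_n^{-k}$ along the way. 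This buys you information the paper only obtains later (essentially the content of Proposition~\ref{Tn,n-1 comparison Prop} for the specific ASG rates), but at the cost of hypotheses beyond \eqref{Limsup Birth/Death rates condition}: you use eventual monotonicity of $\mu_n$ (both in the holding-rate comparison $\lambda_m+\mu_m\ge\mu_n$ for $m\ge n$ and in summing the geometric series against $\mu_j^{-k}$) and, decisively, $\sum_n\mu_n^{-1}<\infty$ to make the sum converge. You flag this yourself by retreating to ``the models of interest''. So as a proof of the lemma in its stated generality there is a gap; for the ASG, where $\mu_n\sim n^2/2$, your argument is sound and in some ways more informative than the paper's.
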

\begin{remark}
 It should be noted that tighter asymptotic expressions for the above moments can be found in \cite[Lemma 2.2]{bansaye2016speed} in a less general setting. There the authors consider only the first three moments, and more restrictions are placed on the birth and death rates.
\end{remark}
\begin{proof}
  Firstly, note that condition \eqref{Limsup Birth/Death rates condition} allows us to pick an $n_0$ such that
  \begin{equation}
      \sup_{n\geq n_0}\frac{\lambda_n}{\mu_n+\lambda_n}=:p<\frac{1}{2},\label{p def}
  \end{equation}
  Now, let $\left(\Xi_t\right)_{t\geq0}$ be a simple random walk with up/down transition probabilities of $p$, $1-p$ respectively with holding times equal to that of the birth/death process, and let its hitting time of $n-1$ from $n$ be denoted by $T$. Then, for $n\geq n_0$, $T_{n,n-1}$ is stochastically dominated by $T$.

  If we consider what happens after one step of $\Xi$ we obtain the equality
  \begin{equation}
      T\overset{d}{=}\zeta_n+\mathbbm{1}_{\hat{E}_n}(\hat{T}+\widetilde{T}),\label{Xi hitting time recursion}
  \end{equation}
  where $\zeta_n\sim\text{Exp}(\lambda_n+\mu_n)$ and $\widehat{E}_n$ is the event that $\Xi$ jumped up after this time, with $\widehat{T}$ and $\widetilde{T}$ being independent copies of $T$. Taking expectation of \eqref{Xi hitting time recursion} we see that
  \[\E[T]=\E[\zeta_n]+2p\E[T]\]
  and so since $p<1/2$, $\E[T]<\infty$. Now, let $k\in\mathbb{N}$ and suppose $\E\left[T^j\right]<\infty$ for all $j< k$. If we take the $k$th moment of \eqref{Xi hitting time recursion} we see that
  \begin{align*}
      \E\left[T^k\right]&=\sum_{j=0}^{k}\binom{k}{j}p\E\left[\zeta_n^{k-j}\right]\E\left[(\widehat{T}+\widetilde{T})^j\right],\\
      &=\sum_{j=0}^{k}\binom{k}{j}p\E\left[\zeta_n^{k-j}\right]\sum_{i=0}^j\binom{j}{i}\E\left[T^i\right]\E\left[T^{j-i}\right].
  \end{align*}
  Now on the right hand side the coefficient of $\E\left[T^k\right]$ is $2p$, coming from the $j=k$, $i=0,k$ terms. Rearranging this and using the fact that $p<1/2$ we have finiteness of $\E\left[T^k\right]$. We can then inductively obtain finiteness of all moments of $T$. This immediately bounds all moments of $T_{n,n-1}$ for $n\geq n_0$ by the corresponding moment for $T$ and so \eqref{Bounded hitting time moments, b/d process} follows.
\end{proof}
With this established we proceed by considering the moments of these hitting times for the ASG. A lower bound, $\E[(T_{n,n-1}^{0,\theta})^k]\leq\E[(T_{n,n-1}^{\sigma,\theta})^k]$, is immediate. The following proposition provides a corresponding upper bound:
\begin{proposition}
\label{Tn,n-1 comparison Prop}
For sufficiently large $n$,
  \begin{equation}
  \E\left[\left(T_{n,n-1}^{\sigma,\theta}\right)^k\right]\leq\E\left[\left(T_{n,n-1}^{0,\theta}\right)^k\right]+\frac{B_k^{\sigma,\theta}}{n^{2k+1}}.\label{ASG Tn,n-1 kth moment ineqs}
  \end{equation}
  where $B_k^{\sigma,\theta}\in\mathbb{R}^+$.
\end{proposition}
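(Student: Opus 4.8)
The lower bound $\E[(T_{n,n-1}^{0,\theta})^k]\le\E[(T_{n,n-1}^{\sigma,\theta})^k]$ being immediate, I focus on the upper bound. Write $m_n^{(k)}:=\E[(T_{n,n-1}^{\sigma,\theta})^k]$ and let $p_n:=\mathbb P(E_n)=\sigma/(n-1+\theta+\sigma)$ be the probability that the ASG jumps up on first leaving level $n$; note $p_n=O(1/n)$. The plan is to take the $k$th moment of the first-step recurrence \eqref{ASG hitting time recursion}. Using that $\xi_n$, $E_n$ and the two hitting times on the right are mutually independent, together with $\mathbbm 1_{E_n}^j=\mathbbm 1_{E_n}$ for $j\geq1$, a double binomial expansion yields
\[
m_n^{(k)}=\E[\xi_n^k]+p_n\sum_{j=1}^{k}\binom{k}{j}\E[\xi_n^{k-j}]\sum_{i=0}^{j}\binom{j}{i}m_{n+1}^{(i)}m_n^{(j-i)},
\]
with the convention $m_n^{(0)}=1$. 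The two terms carrying a full $k$th moment are $m_n^{(k)}$ (from $j=k,i=0$) and $m_{n+1}^{(k)}$ (from $j=k,i=k$), each with coefficient $p_n$; isolating them turns the identity into the two-level recurrence
\[
(1-p_n)m_n^{(k)}=g_n+p_n\,m_{n+1}^{(k)},\qquad g_n:=\E[\xi_n^k]+p_n\widetilde R_n^{(k)},
\]
where $\widetilde R_n^{(k)}$ collects only moments of order strictly below $k$, evaluated at levels $n$ and $n+1$.

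The argument then proceeds by induction on $k$, the case $k=0$ giving $m_n^{(0)}=1$. Assuming $m_n^{(l)}\le C_l\,n^{-2l}$ for all $l<k$ and all large $n$, a degree count bounds the inhomogeneous term: since $\E[\xi_n^{k-j}]=O(n^{-2(k-j)})$ and every surviving product satisfies $m_{n+1}^{(i)}m_n^{(j-i)}=O(n^{-2j})$, each summand of $\widetilde R_n^{(k)}$ is $O(n^{-2k})$, so $p_n\widetilde R_n^{(k)}=O(n^{-2k-1})$. For the leading piece I would use that $\xi_n\sim\mathrm{Exp}(n(n-1+\theta+\sigma)/2)$ is stochastically dominated by $T_{n,n-1}^{0,\theta}\sim\mathrm{Exp}(n(n-1+\theta)/2)$, whence $\E[\xi_n^k]\le\E[(T_{n,n-1}^{0,\theta})^k]=O(n^{-2k})$.

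It remains to solve the two-level recurrence. Setting $q_n:=p_n/(1-p_n)=\sigma/(n-1+\theta)$ and iterating $L$ times gives
\[
m_n^{(k)}=\sum_{l=n}^{n+L}\Big(\prod_{r=n}^{l-1}q_r\Big)\frac{g_l}{1-p_l}+\Big(\prod_{r=n}^{n+L}q_r\Big)m_{n+L+1}^{(k)}.
\]
Here Lemma \ref{lemma3.1} is the key input: because $\limsup_n\lambda_n/\mu_n=\limsup_n\sigma/(n-1+\theta)=0<1$, it provides $\sup_{n\ge n_0}m_n^{(k)}<\infty$, so the boundary term vanishes as $L\to\infty$ and the series converges. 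Since $q_r\le q_n=O(1/n)$ for $r\ge n$ and $g_l=O(l^{-2k})$, the $l=n$ term equals $\E[\xi_n^k]+O(n^{-2k-1})$ and the remaining geometric tail is again $O(n^{-2k-1})$. Combining with the domination $\E[\xi_n^k]\le\E[(T_{n,n-1}^{0,\theta})^k]$ yields simultaneously the bound $m_n^{(k)}=O(n^{-2k})$, which closes the induction, and the claimed inequality \eqref{ASG Tn,n-1 kth moment ineqs}, with $B_k^{\sigma,\theta}$ absorbing the implied constants.

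I expect the main obstacle to be the coupling of level $n$ to level $n+1$ through $p_n\,m_{n+1}^{(k)}$, which prevents a purely local solution of the moment recurrence; the device for overcoming it is the explicit geometric series above, whose convergence and vanishing boundary term rely essentially on the uniform moment bound of Lemma \ref{lemma3.1}. The rest is bookkeeping: checking that each correction is genuinely one power of $n$ below the leading exponential moment, precisely the gain furnished by the factor $p_n=O(1/n)$.
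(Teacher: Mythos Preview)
Your proposal is correct and reaches the same conclusion, but organises the argument differently from the paper. The paper also starts from the first-step recurrence \eqref{ASG hitting time recursion}, but instead of the exact double binomial expansion it uses the crude bound $(x+y)^j\le 2^j(x^j+y^j)$ to obtain an inequality of the form
\[
a_{n,k}\le x_{n,k}+\sum_{j=1}^{k}\binom{k}{j}\frac{2^j\sigma}{n-1+\theta+\sigma}\,x_{n,k-j}\bigl(a_{n+1,j}+a_{n,j}\bigr),
\]
with $a_{n,k}=\E[(T_{n,n-1}^{\sigma,\theta})^k]$ and $x_{n,k}=\E[(T_{n,n-1}^{0,\theta})^k]$. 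It then bootstraps: starting from $a_{n,k}=O(1)$ (Lemma~\ref{lemma3.1}), each substitution into the inequality improves the exponent by one, so after $2k$ passes one reaches $a_{n,k}=O(n^{-2k})$, and one further substitution yields the $O(n^{-2k-1})$ error term. You instead keep the exact identity, peel off the two top-order terms to reduce to the scalar linear recurrence $(1-p_n)m_n^{(k)}=g_n+p_n\,m_{n+1}^{(k)}$, and solve it explicitly as a geometric series, with Lemma~\ref{lemma3.1} killing the boundary term; the induction is on $k$ rather than on the exponent of $n$. Your route is arguably more transparent about where the extra factor $n^{-1}$ originates (it is exactly the $p_n$ in front of the tail), while the paper's route avoids solving any recurrence and stays purely at the level of inequalities, at the price of $2k$ iterations.
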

\begin{proof}
  We start our proof by noting that
  \begin{equation}
    \E \left[\xi_n^k\right]\leq\E\left[\left(T_{n,n-1}^{0,\theta}\right)^k\right].\label{xi n Ineq}
  \end{equation}
This substitution will be used since a comparison of the ASG to the Kingman coalescent is the goal. Now, we take both sides of \eqref{ASG hitting time recursion} to the power $k$ and apply expectations, remembering the independence of the event $E_n$. Using the inequalities \eqref{xi n Ineq} and $(x+y)^j\leq2^j(x^j+y^j)$ for $j \in \mathbb{N}$ and $x,y\geq 0$ we get 
  \begin{align*}
  \E\left[\left(T_{n,n-1}^{\sigma,\theta}\right)^k\right]\leq {}& \E\left[\left(T_{n,n-1}^{0,\theta}\right)^k\right]\\
  & {}+\sum_{j=1}^{k}\binom{k}{j}\frac{2^j\sigma}{n-1+\theta+\sigma}\E\left[\xi_n^{k-j}\left(\left(\widehat{T}_{n+1,n}^{\sigma,\theta}\right)^j+\left(\widehat{T}_{n,n-1}^{\sigma,\theta}\right)^j\right)\right].
  \end{align*}
  Applying the independence of the $\xi_n$ random variable to each term in the sum, using \eqref{xi n Ineq} again and letting $a_{n,k}:=\E[(T_{n,n-1}^{\sigma,\theta})^k]$, $x_{n,k}:=\E[(T_{n,n-1}^{0,\theta})^k]$, we obtain the following recursion formula:
  \begin{equation}
  a_{n,k}\leq x_{n,k}+\sum_{j=1}^{k}\binom{k}{j}\frac{2^j\sigma}{n-1+\theta+\sigma}x_{n,k-j}\left(a_{n+1,j}+a_{n,j}\right).\label{a_n,k b_n,k Recursion}
  \end{equation}
  Next we need to show that $a_{n,k}=O(n^{-2k})$ as $n\to\infty$. We start by noting that the ASG satisfies \eqref{Limsup Birth/Death rates condition} and so \eqref{Bounded hitting time moments, b/d process} holds meaning $a_{n,k}=O(1)$ as $n\to\infty$. We can now recursively use \eqref{a_n,k b_n,k Recursion} to improve these asymptotics. To this end, suppose that $a_{n,k}=O(n^{-i})$ for some $0\leq i\leq 2k-1$ so that there exists a constant $C_i$ such that $a_{n,k} \leq C_in^{-i}$ for all $n$ greater than or equal to some $n_0$. Since $x_{n,j}=j!2^{j}n^{-j}(n-1+\theta)^{-j}$ there exists $B_{j}^{0,\theta}$ such that
  \begin{equation}
      x_{n,j}\leq\frac{B_{j}^{0,\theta}}{n^{2j}},\qquad \forall j\in\N.\label{Kingman Tn,n-1 bound}
  \end{equation}
  Substituting these upper bounds into \eqref{a_n,k b_n,k Recursion} we obtain the following:
  \[a_{n,k}\leq\frac{B_k^{0,\theta}}{n^{2k}}+\sum_{j=1}^{k}\binom{k}{j}\frac{2^j\sigma}{n-1+\theta+\sigma}\left(\frac{B_{k-j}^{0,\theta}}{n^{2(k-j)}}\right)\left(\frac{C_i}{n^i}+\frac{C_i}{n^i}\right).\]
  Multiplying this inequality by $n^{i+1}$ we get
  \[n^{i+1}a_{n,k}\leq\frac{B_k^{0,\theta}n^{i+1}}{n^{2k}}+\sum_{j=1}^{k}\binom{k}{j}\frac{2^{j+1}B_k^{0,\theta}C_i\sigma}{n-1+\theta+\sigma}\left(n^{-2k+2j}\times n^{-i}\times n^{i+1}\right).\]
  Tallying up the indices of $n$ in the bracket we get $-2k+2j+1$. This expression is maximised at $j=k$, where it is equal to $1$. Thanks to the preceding term outside the bracket we see that summand as a whole is $O(1)$ as $n\to\infty$. This means that in fact $a_{n,k}=O(n^{-i-1})$ and so we recursively apply this argument until we obtain $a_{n,k}=O(n^{-2k})$.

  To conclude the proof we multiply \eqref{a_n,k b_n,k Recursion} by $n^{2k}$:
  \begin{align*}
      n^{2k}a_{n,k}\leq{}& n^{2k}x_{n,k}\\
      &{}+\sum_{j=1}^k\binom{k}{j}\frac{2^j\sigma}{n-1+\theta+\sigma}n^{2k}x_{n,k-j}\left(a_{n+1,j}+a_{n,j}\right)
  \end{align*}
  and note that the terms in the sum are all $O(n^{-1})$ as $n\to\infty$. Thus we have
  \[n^{2k}a_{n,k}\leq n^{2k}x_{n,k}+O(n^{-1}),\]
  yielding \eqref{ASG Tn,n-1 kth moment ineqs}.
\end{proof}
We now consider the hitting times $T_n^{\sigma,\theta}:=T_{\infty,n}^{\sigma,\theta}$. Before looking at this expression for the ASG we state a brief lemma on the large $n$ asymptotics of the hitting time $T_n^{0,\theta}$ for the Kingman coalescent with mutation:
\begin{lemma}
\label{Kingman Hitting Time Asymptotics Lemma}
  For all $k\in\mathbb{N}$ there exists $C_k^{0,\theta}\in\mathbb{R}^+$ such that
  \begin{equation}
    \E\left[\left(T_n^{0,\theta}\right)^k\right]\leq\frac{C_k^{0,\theta}}{n^k} \qquad \forall n\in\mathbb{N}.\label{Kingman T_n Asymptotics}
  \end{equation}
\end{lemma}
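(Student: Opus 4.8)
The key structural observation is that, because $\sigma=0$, the process $N^{0,\theta}$ is a pure death process: started from infinity it passes through each level exactly once on the way down, spending an $\text{Exp}(\lambda_m)$ holding time at level $m$, where $\lambda_m=m(m-1+\theta)/2$, and these holding times are mutually independent. Consequently the hitting time of $n$ from infinity is the almost surely finite sum
\[
T_n^{0,\theta}\overset{d}{=}\sum_{m=n+1}^\infty T_{m,m-1}^{0,\theta},
\]
the finiteness being exactly the coming down from infinity property. The plan is to bound the $L^k$ norm of this sum termwise and then sum a convergent tail, reusing the single-step estimate already established in the previous proof.

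First I would invoke Minkowski's inequality. Writing $\|\cdot\|_k$ for the $L^k(\mathbb{P})$ norm, and recalling that $T_{m,m-1}^{0,\theta}\sim\text{Exp}(\lambda_m)$ so that the bound \eqref{Kingman Tn,n-1 bound} reads $\E[(T_{m,m-1}^{0,\theta})^k]\leq B_k^{0,\theta}/m^{2k}$, we obtain
\[
\big\|T_{m,m-1}^{0,\theta}\big\|_k\leq\frac{(B_k^{0,\theta})^{1/k}}{m^2}.
\]
Applying Minkowski to the partial sums $S_N:=\sum_{m=n+1}^N T_{m,m-1}^{0,\theta}$ then yields, uniformly in $N$,
\[
\|S_N\|_k\leq\sum_{m=n+1}^N\big\|T_{m,m-1}^{0,\theta}\big\|_k\leq(B_k^{0,\theta})^{1/k}\sum_{m=n+1}^\infty\frac{1}{m^2}\leq\frac{(B_k^{0,\theta})^{1/k}}{n},
\]
where the last step uses $\sum_{m>n}m^{-2}\leq\int_n^\infty x^{-2}\,\mathrm{d}x=1/n$.

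To pass to the infinite sum I would use that the summands are nonnegative, so $S_N\uparrow T_n^{0,\theta}$ almost surely; the monotone convergence theorem then gives $\E[(T_n^{0,\theta})^k]=\lim_{N\to\infty}\E[S_N^k]\leq B_k^{0,\theta}/n^k$, which is precisely \eqref{Kingman T_n Asymptotics} with $C_k^{0,\theta}=B_k^{0,\theta}$. Since the tail bound $\sum_{m>n}m^{-2}\leq1/n$ holds for every $n\in\mathbb{N}$, the inequality is valid for all $n$ with no separate small-$n$ argument required.

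The only delicate point is the legitimacy of extending (the triangle-inequality form of) Minkowski's inequality to an infinite sum, but this is handled cleanly by working with the finite partial sums $S_N$ and taking monotone limits, so I expect no serious obstacle. Should one prefer to avoid $L^k$ norms, an alternative is to compute the cumulants $\kappa_j=(j-1)!\sum_{m>n}\lambda_m^{-j}=O(n^{-(2j-1)})$ and read the moment orders off the moment--cumulant (complete Bell polynomial) expansion, in which a partition into $r$ blocks contributes a term of order $O(n^{-(2k-r)})=O(n^{-k})$; however, the Minkowski route is shorter and produces the explicit constant directly.
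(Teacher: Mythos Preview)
Your proof is correct and takes a genuinely different route from the paper. The paper expands $\bigl(\sum_{i>n}T_{i,i-1}^{0,\theta}\bigr)^k$ multinomially, uses independence of the holding times to factorise the expectations, bounds each factor by $B_{m_l}^{0,\theta}/i_l^{2m_l}$, and then controls the resulting multi-sums via integral comparison; the dominant contribution comes from the partition with $j=k$ singleton blocks, giving $O(n^{-k})$. Your argument bypasses all of this combinatorics by passing to $L^k$ norms and applying Minkowski's inequality directly, so that only the single-block estimate $\|T_{m,m-1}^{0,\theta}\|_k\leq (B_k^{0,\theta})^{1/k}m^{-2}$ and the tail bound $\sum_{m>n}m^{-2}\leq 1/n$ are needed. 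This is shorter, does not use independence at all, and delivers the explicit constant $C_k^{0,\theta}=B_k^{0,\theta}$. The paper's expansion, on the other hand, exposes the finer structure of the moment (the $n^{-(2k-j)}$ contributions from partitions into $j$ blocks), which is the template reused in the proof of Proposition~\ref{prop3.5}; your Minkowski approach would not transfer to that proposition because there one needs to isolate the leading Kingman term and control the \emph{difference}, not merely bound the full moment.
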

\begin{proof}
  For this proof we will take advantage of the fact that the hitting time $T_n^{0,\theta}$ can be written as a sum of the hitting times $T_{i,i-1}^{0,\theta}$:
  \[T_n^{0,\theta}=\sum_{i\geq n+1}T_{i,i-1}^{0,\theta}.\]
  Taking the $k$th power of this expression we get
    \begin{equation*}
    \left(T_n^{0,\theta}\right)^k\overset{d}{=}\left(\sum_{i=n+1}^\infty T_{i,i-1}^{0,\theta}\right)^k\overset{d}{=}\sum_{j=1}^k\sum_{n+1\leq i_1<\dots<i_j}\sum_{\substack{\boldsymbol{m}\in\mathbb{N}^j\\|\boldsymbol{m}|=k}}\binom{k}{\boldsymbol{m}}\prod_{l=1}^j\left(T_{i_l,i_l-1}^{0,\theta}\right)^{m_l}.
  \end{equation*}
  Since the $i_l$ are distinct we can apply expectations to the above and separate out each of the terms in the product by independence. Doing so yields the following:
  \begin{align*}
    \E\left[\left(T_n^{0,\theta}\right)^k\right]&=\sum_{j=1}^k\sum_{n+1\leq i_1<\dots<i_j}\sum_{\substack{\boldsymbol{m}\in\mathbb{N}^j\\|\boldsymbol{m}|=k}}\binom{k}{\boldsymbol{m}}\prod_{l=1}^j\E\left[\left(T_{i_l,i_l-1}^{0,\theta}\right)^{m_l}\right]\\
    &\leq\sum_{j=1}^k\sum_{n+1\leq i_1<\dots<i_j}\sum_{\substack{\boldsymbol{m}\in\mathbb{N}^j\\|\boldsymbol{m}|=k}}\binom{k}{\boldsymbol{m}}\prod_{l=1}^j\frac{B_{m_l}^{0,\theta}}{i_l^{2m_l}}\\
    &\leq\sum_{j=1}^k\sum_{\substack{\boldsymbol{m}\in\mathbb{N}^j\\|\boldsymbol{m}|=k}}\binom{k}{\boldsymbol{m}}\prod_{l=1}^j\sum_{i_l\geq n+1}\frac{B_{m_l}^{0,\theta}}{i_l^{2m_l}}\\
    &\leq\sum_{j=1}^k\sum_{\substack{\boldsymbol{m}\in\mathbb{N}^j\\|\boldsymbol{m}|=k}}\binom{k}{\boldsymbol{m}}\prod_{l=1}^j\int_n^\infty\frac{B_{m_l}^{0,\theta}}{x^{2m_l}}\,{\rm{d}}x\\
    &=\sum_{j=1}^k\sum_{\substack{\boldsymbol{m}\in\mathbb{N}^j\\|\boldsymbol{m}|=k}}\binom{k}{\boldsymbol{m}}\prod_{l=1}^j\frac{B_{m_l}^{0,\theta}}{(2m_l-1)n^{2m_l-1}}\\
    &=\sum_{j=1}^k\sum_{\substack{\boldsymbol{m}\in\mathbb{N}^j\\|\boldsymbol{m}|=k}}\binom{k}{\boldsymbol{m}}\frac{1}{n^{2k-j}}\prod_{l=1}^j\frac{B_{m_l}^{0,\theta}}{(2m_l-1)}\\
    &\leq\frac{C_k^{0,\theta}}{n^k},
  \end{align*}
  for some $C_k^{0,\theta}\in\mathbb{R}^+$,where $|\boldsymbol{m}|$ is simply the $L^1$ norm.
\end{proof}
Using Proposition \ref{Tn,n-1 comparison Prop} and a similar approach as the previous lemma we can analyse the large $n$ asymptotics of $T_n^{\sigma,\theta}$. Again we find that the hitting times for the coalescent and the ASG are ``close'' in the sense that the immediate bound $\E[(T_n^{0,\theta})^k]\leq\E[(T_n^{\sigma,\theta})^k]$ is complemented by the following result:
\begin{proposition}\label{prop3.5} For $n$ sufficiently large, we have
  \begin{equation}
    \E\left[\left(T_n^{\sigma,\theta}\right)^k\right]\leq\E\left[\left(T_n^{0,\theta}\right)^k\right]+\frac{C_k^{\sigma,\theta}}{n^{k+1}},\label{ASG T_n Kingman comparison Ineq}
  \end{equation}
  where $C_k^{\sigma,\theta}\in\mathbb{R}^+.$
\end{proposition}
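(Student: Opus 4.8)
The plan is to mirror the proof of Lemma \ref{Kingman Hitting Time Asymptotics Lemma}, replacing the Kingman level-crossing moments by their ASG counterparts and controlling the discrepancy by means of Proposition \ref{Tn,n-1 comparison Prop}. Restricting throughout to $n$ large enough that Proposition \ref{Tn,n-1 comparison Prop} applies at every level $i\geq n+1$, I would first record the decomposition
\[
T_n^{\sigma,\theta}\overset{d}{=}\sum_{i\geq n+1}T_{i,i-1}^{\sigma,\theta},
\]
into independent first-passage times, which follows from the strong Markov property and the nearest-neighbour structure of the ASG exactly as in the previous lemma. Expanding the $k$th power by the multinomial theorem, taking expectations and separating terms across distinct indices $i_1<\dots<i_j$ by independence then yields the same combinatorial sum as in Lemma \ref{Kingman Hitting Time Asymptotics Lemma}, but with each factor $\E[(T_{i_l,i_l-1}^{0,\theta})^{m_l}]$ replaced by $a_l:=\E[(T_{i_l,i_l-1}^{\sigma,\theta})^{m_l}]$. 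Subtracting the corresponding expression for $\E[(T_n^{0,\theta})^k]$ term by term reduces the proposition to bounding, for each fixed $j$, each tuple $(i_1,\dots,i_j)$ and each multi-index $\boldsymbol{m}$ with $|\boldsymbol{m}|=k$, the product difference $\prod_{l=1}^j a_l-\prod_{l=1}^j b_l$, where $b_l:=\E[(T_{i_l,i_l-1}^{0,\theta})^{m_l}]$.

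Next I would feed in Proposition \ref{Tn,n-1 comparison Prop} in the form $a_l\leq b_l+B_{m_l}^{\sigma,\theta}i_l^{-(2m_l+1)}$, together with the Kingman bound \eqref{Kingman Tn,n-1 bound}, $b_l\leq B_{m_l}^{0,\theta}i_l^{-2m_l}$; since $i_l\geq n+1\geq1$ these combine to give $a_l\leq\tilde B_{m_l}i_l^{-2m_l}$ with $\tilde B_{m_l}:=B_{m_l}^{0,\theta}+B_{m_l}^{\sigma,\theta}$. To expose the gain I would apply the telescoping identity
\[
\prod_{l=1}^j a_l-\prod_{l=1}^j b_l=\sum_{r=1}^j\Big(\prod_{l<r}a_l\Big)(a_r-b_r)\Big(\prod_{l>r}b_l\Big),
\]
bounding $a_r-b_r\leq B_{m_r}^{\sigma,\theta}i_r^{-(2m_r+1)}$ and every other factor by its $i_l^{-2m_l}$ bound. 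Each telescoping term then carries the same $\prod_l i_l^{-2m_l}$ decay as the Kingman term but with one extra factor of $i_r^{-1}$.

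Finally I would sum over $n+1\leq i_1<\dots<i_j$ as in Lemma \ref{Kingman Hitting Time Asymptotics Lemma}, dropping the ordering constraint and comparing $\sum_{i\geq n+1}i^{-p}$ to $\int_n^\infty x^{-p}\,{\rm d}x$. The factor $i_r^{-(2m_r+1)}$ contributes $n^{-2m_r}$ in place of $n^{-(2m_r-1)}$, while the remaining $j-1$ factors contribute $n^{-(2m_l-1)}$ each, so the total power is $n^{-(2k-j+1)}$, which decays slowest at $j=k$ and yields the claimed $O(n^{-(k+1)})$ bound, with $C_k^{\sigma,\theta}$ absorbing the finitely many combinatorial and multinomial constants. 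I expect the main obstacle to be the bookkeeping in the telescoping step: one must verify that every factor multiplying the gain $a_r-b_r$ admits the uniform $i_l^{-2m_l}$ bound, so that the extra $i_r^{-1}$ is genuinely preserved and not offset by worse growth elsewhere, and that these bounds hold simultaneously at all levels $i_l\geq n+1$ once $n$ is taken sufficiently large.
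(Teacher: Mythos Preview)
Your proposal is correct and follows the same overall architecture as the paper's proof: multinomial expansion of $(T_n^{\sigma,\theta})^k$ into products over distinct levels, independence, application of Proposition~\ref{Tn,n-1 comparison Prop} at each level, and integral comparison of the resulting sums to extract the $n^{-(2k-j+1)}$ decay. The one technical difference lies in how the product discrepancy $\prod_l a_l-\prod_l b_l$ is handled. The paper expands $\prod_l(x_{i_l,m_l}+y_{i_l,m_l})$ in full over binary vectors $\boldsymbol{\alpha}\in\{0,1\}^j$, then argues that for $n$ large enough $x_{i_l,m_l}>y_{i_l,m_l}$, so that every non-leading term is dominated by one of the $j$ terms containing exactly one $y$-factor, and pays a crude multiplicative constant $2^j-1$ for the count. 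Your telescoping identity reaches the same endpoint more directly: it produces exactly $j$ terms, each carrying a single gain $a_r-b_r\leq B_{m_r}^{\sigma,\theta}i_r^{-(2m_r+1)}$, and the surrounding factors are bounded by $\tilde B_{m_l}i_l^{-2m_l}$ without ever needing the auxiliary observation that $x>y$. Both routes yield the same final estimate; yours is marginally cleaner in the bookkeeping and avoids one ``for $n$ sufficiently large'' clause.
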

\begin{proof}
  Similar to the previous lemma, we can write the $k$th moment of $T_n^{\sigma,\theta}$ in the following way:
  \begin{equation}
    \E\left[\left(T_n^{\sigma,\theta}\right)^k\right]=\sum_{j=1}^k\sum_{n+1\leq i_1<\dots<i_j}\sum_{\substack{\boldsymbol{m}\in\mathbb{N}^j\\|\boldsymbol{m}|=k}}\binom{k}{\boldsymbol{m}}\prod_{l=1}^j\E\left[\left(T_{i_l,i_l-1}^{\sigma,\theta}\right)^{m_l}\right].\label{E[T_n^k] formula}
  \end{equation}
  Substituting the inequality from \eqref{ASG Tn,n-1 kth moment ineqs} into \eqref{E[T_n^k] formula} we obtain the following:  \begin{equation}
    \E\left[\left(T_n^{\sigma,\theta}\right)^k\right]\leq\sum_{j=1}^k\sum_{n+1\leq i_1<\dots<i_j}\sum_{\substack{\boldsymbol{m}\in\mathbb{N}^j\\|\boldsymbol{m}|=k}}\binom{k}{\boldsymbol{m}}\prod_{l=1}^j\left(\E\left[\left(T_{i_l,i_l-1}^{0,\theta}\right)^{m_l}\right]+\frac{B_{m_l}^{\sigma,\theta}}{i_l^{2m_l+1}}\right).\label{Big T_n^k ineq sum+product}
  \end{equation}
  From this point on we will use the shorthand $x_{i,m}=\E\left[\left(T_{i,i-1}^{0,\theta}\right)^{m}\right]$ and $y_{i,m}:=B_{m}^{\sigma,\theta}/i^{2m+1}$. We also let $B_{m}^{0,\theta}$ be the constant from \eqref{Kingman Tn,n-1 bound} such that $x_{i,m}\leq B_{m}^{0,\theta}/i^{2m}$ and consider $n$ large enough so that $x_{i,m}>y_{i,m}$ for all $i\geq n$ and $1\leq k\leq m$.

  Thinking about expanding the brackets inside the product we see that the leading term recovers exactly $\E\left[\left(T_n^{0,\theta}\right)^k\right]$. Meanwhile, we can encode the remaining terms as follows:
  \begin{equation}
      \E\left[\left(T_n^{\sigma,\theta}\right)^k\right]\leq\E\left[\left(T_n^{0,\theta}\right)^k\right]+\sum_{j=1}^k\sum_{n+1\leq i_1<\dots<i_j}\sum_{\substack{\boldsymbol{m}\in\mathbb{N}^j\\|\boldsymbol{m}|=k}}\binom{k}{\boldsymbol{m}}\sum_{\substack{{\boldsymbol \alpha}\in\{0,1\}^j\\|{\boldsymbol \alpha}|\neq j}}\prod_{l=1}^jx_{i_l,m_l}^{\alpha_l}y_{i_l,m_l}^{1-\alpha_l}.\label{alpha sum formula}
  \end{equation}
  Now, since we are considering $n$ large enough so that $x_{i_l,m_l}>y_{i_l,m_l}$, the largest term in the sum is the one of the form $x_{i_1,m_1} \dots x_{i_{l-1},m_{l-1}} y_{i_l,m_l} x_{i_{l+1},m_{l+1}} \dots x_{i_j,m_j}$ for some $l=1,\dots,j$. We do not know which one of these terms will be largest so we use the following upper bound:
  \begin{align*}
    \E\left[\left(T_n^{\sigma,\theta}\right)^k\right]\leq{}&\E\left[\left(T_n^{0,\theta}\right)^k\right]\\
    &{}+\sum_{j=1}^k\sum_{n+1\leq i_1<\dots<i_j}\sum_{\substack{\boldsymbol{m}\in\mathbb{N}^j\\|\boldsymbol{m}|=k}}\binom{k}{\boldsymbol{m}}(2^j-1)\sum_{l=1}^jy_{i_l,m_l}\prod_{p\neq l}x_{i_p,m_p}.
  \end{align*}
  Note that the factor of $2^j-1$ appears as this is the total number of terms in the sum over ${\boldsymbol \alpha}$ in \eqref{alpha sum formula}. We can now bound this above by separating out the sums over the different indices, dropping the restriction of $i_1<\ldots<i_j$, and using the bound $x_{i_l,m_l}\leq B_{m_l}^{0,\theta}/i_l^{2m_l}$. This gives us
  \begin{align*}
    \E\left[\left(T_n^{\sigma,\theta}\right)^k\right]\leq {}& \E\left[\left(T_n^{0,\theta}\right)^k\right]\\
    & {}+\sum_{j=1}^k\sum_{\substack{\boldsymbol{m}\in\mathbb{N}^j\\|\boldsymbol{m}|=k}}\binom{k}{\boldsymbol{m}}(2^j-1)\sum_{l=1}^j\sum_{i_l\geq n+1}\frac{B_{m_l}^{\sigma,\theta}}{i_l^{2m_l+1}}\prod_{p\neq l}\sum_{i_p\geq n+1}\frac{B_{m_p}^{0,\theta}}{i_p^{2m_p}}.
  \end{align*}
  Using integral bounds on each of these sums we obtain the following:
  \begin{align*}
    \E&\left[\left(T_n^{\sigma,\theta}\right)^k\right]\leq\E\left[\left(T_n^{0,\theta}\right)^k\right]\\
    &+\sum_{j=1}^k\sum_{\substack{\boldsymbol{m}\in\mathbb{N}^j\\|\boldsymbol{m}|=k}}\binom{k}{\boldsymbol{m}}(2^j-1)\sum_{l=1}^j\int_n^\infty\frac{B_{m_1}^{0,\theta}}{x^{2m_1}}\,{\rm{d}}x\dots\int_n^\infty\frac{B_{m_l}^{\sigma,\theta}}{x^{2m_l+1}}\,{\rm{d}}x\dots\int_n^\infty\frac{B_{m_{j}}^{0,\theta}}{x^{2m_{j}}}\,{\rm{d}}x\\
    ={}&\E\left[\left(T_n^{0,\theta}\right)^k\right]+\sum_{j=1}^k\sum_{\substack{\boldsymbol{m}\in\mathbb{N}^j\\|\boldsymbol{m}|=k}}\binom{k}{\boldsymbol{m}}(2^j-1)\sum_{l=1}^j\frac{B_{m_l}^{\sigma,\theta}}{2m_ln^{2m_l}}\prod_{p\neq l}\frac{B_{m_p}^{0,\theta}}{(2m_p-1)n^{2m_p-1}}.
  \end{align*}
  Considering that since all sums are now finite and the $m_i$ sum to $k$, the right hand side becomes
  \begin{align*}
    \E\left[\left(T_n^{\sigma,\theta}\right)^k\right]\leq{}&\E\left[\left(T_n^{0,\theta}\right)^k\right]\\
    &{}+\sum_{j=1}^k\sum_{\substack{\boldsymbol{m}\in\mathbb{N}^j\\|\boldsymbol{m}|=k}}\binom{k}{\boldsymbol{m}}(2^j-1)\sum_{l=1}^j\left(\frac{B_{m_j}^{\sigma,\theta}}{2m_j}\prod_{i\neq l}\frac{B_{m_i}^{0,\theta}}{(2m_i-1)}\right)\frac{1}{n^{2k-j+1}}.
  \end{align*}
  Since all the sums above are finite then there exist $D_j^{\sigma,\theta}\in\mathbb{R}^+$, $j=1,\dots,k$ such that
  \begin{equation*}
    \E\left[\left(T_n^{\sigma,\theta}\right)^k\right]\leq\E\left[\left(T_n^{0,\theta}\right)^k\right]+\sum_{j=1}^k\frac{D_j^{\sigma,\theta}}{n^{2k-j+1}},
  \end{equation*}
  and so we have \eqref{ASG T_n Kingman comparison Ineq}.
\end{proof}
\section{Controlling the ASG at Small Times}\label{Section4}
In this section we discuss the speed of coming down from infinity for the ASG and establish the mode of its convergence to that speed.

Before talking about the ASG we formally establish that the Kingman coalescent with mutation has the same speed of coming down from infinity as that without mutation. It should be noted that we consider this an unsurprising result but have not seen a formal proof, and so we present one here:
\begin{proposition}\label{prop4.1}
  \begin{equation}
    \lim_{t\to0}\frac{tN_t^{0,\theta}}{2}=1~~~\text{a.s}.\label{Kingman with mutation speed of CDI}
  \end{equation}
  \end{proposition}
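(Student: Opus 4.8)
The plan is to reduce the statement to a strong law of large numbers for the hitting times $T_n^{0,\theta} := T_{\infty,n}^{0,\theta}$ and then transfer the conclusion to $N_t^{0,\theta}$ using monotonicity. The key structural observation is that $N^{0,\theta}$ has no upward jumps, so it is a pure death process: it is non-increasing in $t$ and passes through each level exactly once, whence $N_t^{0,\theta}=n$ precisely on the interval $[T_n^{0,\theta},T_{n-1}^{0,\theta})$. Consequently, establishing $nT_n^{0,\theta}\to 2$ almost surely as $n\to\infty$ will be essentially equivalent to \eqref{Kingman with mutation speed of CDI}.

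First I would compute the first two moments of $T_n^{0,\theta}=\sum_{i>n}T_{i,i-1}^{0,\theta}$, using that the summands are independent with $T_{i,i-1}^{0,\theta}\sim\text{Exp}(i(i-1+\theta)/2)$ (this decomposition is already exploited in the proof of Lemma \ref{Kingman Hitting Time Asymptotics Lemma}). This gives $\E[T_n^{0,\theta}]=\sum_{i>n}2/(i(i-1+\theta))=2/n+O(n^{-2})$, so that $n\E[T_n^{0,\theta}]/2\to1$; the linear $\theta$-perturbation in the denominator affects only lower-order terms. Independence also yields $\mathrm{Var}(T_n^{0,\theta})=\sum_{i>n}4/(i(i-1+\theta))^2=O(n^{-3})$, and hence $\mathrm{Var}(nT_n^{0,\theta}/2)=O(n^{-1})$.

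To upgrade this $L^2$ control to almost sure convergence I would apply Chebyshev's inequality along the subsequence $n_k=k^2$: since $\sum_k\mathrm{Var}(n_kT_{n_k}^{0,\theta}/2)=O(\sum_k k^{-2})<\infty$, the Borel--Cantelli lemma gives $n_kT_{n_k}^{0,\theta}/2\to1$ almost surely. The full sequence is then recovered by monotonicity: $n\mapsto T_n^{0,\theta}$ is non-increasing, so for $n_k\le n<n_{k+1}$ we may sandwich $nT_{n_{k+1}}^{0,\theta}/2\le nT_n^{0,\theta}/2\le nT_{n_k}^{0,\theta}/2$, and since $n_{k+1}/n_k\to1$ we have $n/n_k\to1$ and $n/n_{k+1}\to1$, so both outer terms converge to $1$. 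Hence $nT_n^{0,\theta}/2\to1$ almost surely.

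Finally I would translate back to continuous time. For $t$ with $T_n^{0,\theta}\le t<T_{n-1}^{0,\theta}$ we have $N_t^{0,\theta}=n$ and
\[
\frac{nT_n^{0,\theta}}{2}\le\frac{tN_t^{0,\theta}}{2}=\frac{tn}{2}<\frac{nT_{n-1}^{0,\theta}}{2}=\frac{n}{n-1}\cdot\frac{(n-1)T_{n-1}^{0,\theta}}{2};
\]
as $t\to0$ we have $n\to\infty$ and both outer terms tend to $1$ by the previous step, giving \eqref{Kingman with mutation speed of CDI}. I expect the main obstacle to be precisely this upgrade from convergence in probability to almost sure convergence: the variance bound only makes the deviation probabilities summable along a sparse subsequence, so the monotonicity of the hitting times is essential for filling in the gaps. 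As a consistency check, note that the coupling inequality \eqref{PRM Ineq Kingman to Mutation} together with the known Kingman speed $tN_t^{0,0}/2\to1$ immediately yields the upper bound $\limsup_{t\to0}tN_t^{0,\theta}/2\le1$ almost surely, so the substantive content of the argument is the matching lower bound.
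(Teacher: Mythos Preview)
Your argument is correct and self-contained. It differs genuinely from the paper's proof, which does not establish the strong law for $nT_n^{0,\theta}/2$ directly but instead invokes the general result \cite[Theorem 5.1]{bansaye2016speed}: that theorem guarantees almost sure convergence of $N_t^{0,\theta}/\nu_t\to1$ for any birth/death process whose death rates vary regularly with index $\rho>1$ and whose birth-to-death ratio vanishes, and the paper simply checks these hypotheses and then identifies $\nu_t\sim 2/t$ from the asymptotics of $\E[T_n^{0,\theta}]$ (this last computation is shared with your proof). Your route is more elementary---it avoids the external black box by exploiting the variance bound $\mathrm{Var}(nT_n^{0,\theta}/2)=O(n^{-1})$, Borel--Cantelli along $n_k=k^2$, and the monotonicity sandwich---and it makes the almost-sure mechanism completely explicit. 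The paper's route is shorter once the cited theorem is granted, and has the advantage of applying uniformly to the ASG case (Proposition \ref{prop4.2}) without additional work, whereas your approach would need separate handling of the upward jumps there.
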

\begin{proof}
  In order to ensure that there exists a function $\nu_t$ such that $N_t^{0, \theta} / \nu_t$ converges to 1 almost surely, we turn to \cite[Theorem 5.1]{bansaye2016speed} where this limit is considered for general birth/death processes with birth and death rates $\lambda_n$ and $\mu_n$ respectively. There, two conditions are sufficient to ensure the existence of $\nu$:
  \begin{itemize}
    \item $\lim_{n\to\infty}\lambda_n/\mu_n=0$
    \item $(\mu_n)_{n\geq1}$ varies regularly with index $\rho>1$.
  \end{itemize}
   In our case $\lambda_n=0$ so there is only the second condition to check: that the death rate $\mu_n$ varies regularly with an index $\rho>1$ as $n\to\infty$. Recall that a sequence of real nonzero numbers $(a_n)_{n\geq1}$ varies regularly with index $\rho\neq0$ if, for all $b>0$,
   \[\lim_{n\to\infty}\frac{a_{[bn]}}{a_n}=b^\rho.\]
  Since our death rates are a quadratic polynomial they satisfy the above condition with $\rho=2$. This gives us our a.s. convergence.

  We proceed to verify the form of $\nu$ by recalling \eqref{Bansaye speed of cdi} and considering the expected hitting time of $n$ by $N^{0,\theta}$:
  \[\E\left[T^{0,\theta}_n\right]=\sum_{k=n+1}^\infty\frac{2}{k(k-1+\theta)}.\]
  The asymptotic expression for which is found by an integral approximation to sums:
  \begin{equation}
    \int_{n+1}^\infty\frac{2}{x(x+\theta -1)}\,{{\rm{d}}}x\leq\E \left[T^{0,\theta}_n\right]\leq\int_n^\infty\frac{2}{x(x+\theta-1)}\,{{\rm{d}}}x.\nonumber
  \end{equation}
  Making a substitution $x=ny$ we have
  \begin{align*}
    \int_n^\infty\frac{2}{x(x-1+\theta)}\,{{\rm{d}}}x=\frac{2}{n}\int_1^\infty\frac{1}{y(y+\frac{\theta-1}{n})}\,{{\rm{d}}}y
    &=\frac{2}{\theta-1}\ln\left(1+\frac{\theta-1}{n}\right)
    =\frac{2}{n}+O(n^{-2}),
  \end{align*}
and hence $\E[T^{0,\theta}_n] = 2/n + O(n^{-2})$. To see that this implies that $\nu_t$ is asymptotically equivalent to $2/t$ as $t\to0$ we use the definition of $\nu_t$ to get the following set of inequalities:
  \begin{align}
    \E\left[T^{0,\theta}_{\nu_t}\right]\leq t&<\E\left[T^{0,\theta}_{\nu_t-1}\right]\label{Speed of coming down with hitting time ineqs}\\
    \frac{2}{\nu_t}+O((\nu_t)^{-2})\leq t&<\frac{2}{\nu_t-1}+O((\nu_t)^{-2})\nonumber\\
    \frac{2}{t}+O((\nu_tt)^{-1})\leq \nu_t&<\frac{2}{t}\left(\frac{\nu_t}{\nu_t-1}\right)+O((\nu_tt)^{-1}).\nonumber
  \end{align}
  Now, when we divide the error term on either side by $2/t$ we get a function that grows at most like $(\nu_t)^{-1}$ and so goes to zero as $t\to0$. This tells us that $\nu_t$ is asymptotically equivalent to $2/t$ as $t\to0$.
\end{proof}
Thanks to the asymptotics established in the previous section, the equivalent result for the ASG is now simple to prove:
\begin{proposition}\label{prop4.2}
  \begin{equation}
    \lim_{t\to0}\frac{tN_t^{\sigma,\theta}}{2}=1~~~\text{a.s}.\label{ASG Speed of CDI}
  \end{equation}
\end{proposition}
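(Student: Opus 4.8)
The plan is to follow the proof of Proposition~\ref{prop4.1} closely, since the ASG fits into the same framework of general birth/death processes treated in \cite[Theorem 5.1]{bansaye2016speed}. That theorem produces a deterministic speed $\nu_t$, defined as in \eqref{Bansaye speed of cdi}, with $N_t^{\sigma,\theta}/\nu_t\to1$ almost surely, and the task then reduces to identifying $\nu_t\sim 2/t$ as $t\to0$. I note that the coupling inequality \eqref{PRM Ineq Mutation to Selection} already delivers the lower bound $\liminf_{t\to0}tN_t^{\sigma,\theta}/2\geq1$ for free, but the full statement requires the hitting-time estimates of Section~\ref{Section3} to control the upward jumps, so I would invoke \cite[Theorem 5.1]{bansaye2016speed} directly rather than rely on the coupling alone.

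First I would verify the two hypotheses of \cite[Theorem 5.1]{bansaye2016speed} for the ASG, whose rates are $\lambda_n=\sigma n/2$ and $\mu_n=n(n-1+\theta)/2$. The ratio
\[
\frac{\lambda_n}{\mu_n}=\frac{\sigma}{n-1+\theta}\longrightarrow0
\]
as $n\to\infty$, giving the first condition; and since $\mu_n$ is a quadratic polynomial it varies regularly with index $\rho=2>1$, giving the second. Hence there exists $\nu_t$ as in \eqref{Bansaye speed of cdi} with $N_t^{\sigma,\theta}/\nu_t\to1$ almost surely.

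Next I would pin down the first-moment asymptotics of the hitting time. Taking $k=1$ in Proposition~\ref{prop3.5} gives
\[
\E\left[T_n^{\sigma,\theta}\right]\leq\E\left[T_n^{0,\theta}\right]+\frac{C_1^{\sigma,\theta}}{n^2},
\]
while the immediate bound $\E[T_n^{0,\theta}]\leq\E[T_n^{\sigma,\theta}]$ supplies the reverse inequality. Combined with the estimate $\E[T_n^{0,\theta}]=2/n+O(n^{-2})$ established in the proof of Proposition~\ref{prop4.1}, this yields
\[
\E\left[T_n^{\sigma,\theta}\right]=\frac{2}{n}+O(n^{-2}).
\]
Feeding this into the chain of inequalities \eqref{Speed of coming down with hitting time ineqs}---with $T^{0,\theta}$ replaced by $T^{\sigma,\theta}$ throughout---reproduces verbatim the argument of Proposition~\ref{prop4.1} and shows that $\nu_t$ is asymptotically equivalent to $2/t$ as $t\to0$. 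Together with $N_t^{\sigma,\theta}/\nu_t\to1$ this gives \eqref{ASG Speed of CDI}.

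The only substantive input beyond Proposition~\ref{prop4.1} is the first-moment comparison $\E[T_n^{\sigma,\theta}]=2/n+O(n^{-2})$, and this is exactly the $k=1$ case of Proposition~\ref{prop3.5}. The remaining obstacle is therefore merely checking that the $O(n^{-2})$ error in the expected hitting time stays negligible against $2/n$ after the substitution $n=\nu_t$; this is immediate since the error-to-main-term ratio is $O(\nu_t^{-1})\to0$. I expect no genuine difficulty here, the real work having already been absorbed into the moment bounds of Section~\ref{Section3}.
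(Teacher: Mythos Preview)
Your proposal is correct and follows essentially the same approach as the paper: both verify the two hypotheses of \cite[Theorem 5.1]{bansaye2016speed} for the ASG rates, then sandwich $\E[T_n^{\sigma,\theta}]$ between $\E[T_n^{0,\theta}]$ and $\E[T_n^{0,\theta}]+C_1^{\sigma,\theta}/n^2$ via the $k=1$ case of Proposition~\ref{prop3.5}, and finally feed the resulting $2/n+O(n^{-2})$ asymptotic into the chain \eqref{Speed of coming down with hitting time ineqs} to identify $\nu_t\sim2/t$. The paper writes out that final chain explicitly with $T^{\sigma,\theta}$ bracketed by $T^{0,\theta}$ terms, but the logic is identical to what you outline.
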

\begin{proof}
  Again we appeal to \cite[Theorem 5.1]{bansaye2016speed} in order to verify the above limit. For the ASG, the death rates are the same as Kingman with mutation so $\mu_n$ still varies regularly with index 2. Furthermore, since our birth rates are linear in $n$ we still have that $\lambda_n/\mu_n\to0$ as $n\to\infty$. Thus \cite[Theorem 5.1]{bansaye2016speed} gives us our almost sure convergence.

  Looking at the equivalent of equation \eqref{Speed of coming down with hitting time ineqs} for the ASG we can use \eqref{ASG T_n Kingman comparison Ineq} to state the following series of inequalities for sufficiently small $t$:
  \begin{align*}
    \E\left[T^{0,\theta}_{\nu_t}\right]\leq\E\left[T^{\sigma,\theta}_{\nu_t}\right]\leq t&<\E\left[T^{\sigma,\theta}_{\nu_t-1}\right]\leq \E\left[T_{\nu_t-1}^{0,\theta}\right]+\frac{C_1^{\sigma,\theta}}{(\nu_t-1)^2}\\
    \frac{2}{\nu_t}+O((\nu_t)^{-2})\leq t&<\frac{2}{\nu_t-1}+O((\nu_t)^{-2})\\
    \frac{2}{t}+O((\nu_tt)^{-1})\leq \nu_t&<\frac{2}{t}\left(\frac{\nu_t}{\nu_t-1}\right)+O((\nu_tt)^{-1}).
  \end{align*}
  In the same way as in the previous proof we have that $\nu_t\sim 2/t$ as $t\to0$ and so \eqref{ASG Speed of CDI} is verified.
\end{proof}
In order to establish Theorem \ref{maintheorem} the above almost sure convergence is not enough. In fact, in order to verify the convergence of \eqref{ASG 2nd order pre limit} to \eqref{Gaussian Process Limit for ASG + Kingman} we need more control over the process near zero. Namely we need a result analogous to \cite[Theorem 2]{berestycki2010} for the ASG. We first establish separately the result under neutrality as it will be used in the proof of the equivalent statement when selection is present:
\begin{proposition}\label{Kingman sup lim prop}
  \begin{equation}
    \lim_{t\to0}\E\left[\sup_{s\leq t}\left(\frac{sN_s^{0,\theta}}{2}-1\right)^k\right]=0,\qquad\forall k\in\mathbb{N}.\label{Kingman with Mutation E sup limit k}
  \end{equation}
\end{proposition}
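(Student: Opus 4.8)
The plan is to exploit the pure-death structure of $N^{0,\theta}$ to replace the supremum over the path by a supremum over the discrete levels, to reduce the resulting fluctuations to a weighted martingale, and finally to deal with the random level $N_t^{0,\theta}$ at the right endpoint by truncation. By Lyapunov's inequality it suffices to treat even $k$, for which $(x)^k=|x|^k$; I would therefore bound $\E[\sup_{s\le t}|sN_s^{0,\theta}/2-1|^k]$. Writing $T_m:=T_m^{0,\theta}$ for the hitting time of level $m$, the process has $N_s^{0,\theta}=m$ exactly for $s\in[T_m,T_{m-1})$, on which $sN_s^{0,\theta}/2=sm/2$ is increasing; hence each interval contributes only its endpoint values, and since only levels $m\ge N_t^{0,\theta}$ are visited for $s\le t$ one obtains $\sup_{s\le t}|sN_s^{0,\theta}/2-1|\le\sup_{m\ge N_t^{0,\theta}-1}A_m$, where $A_m:=|T_m(m+1)/2-1|\vee|T_mm/2-1|$.

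First I would write $T_m=\E[T_m]+Y_m$ with $Y_m:=\sum_{i>m}(T_{i,i-1}^{0,\theta}-\mu_i^{-1})$ and $\mu_i:=i(i-1+\theta)/2$. The computation in the proof of Proposition~\ref{prop4.1} gives $\E[T_m]=2/m+O(m^{-2})$, so the deterministic part of $A_m$ is $O(1/m)$ and the task reduces to controlling the fluctuation $(m+1)|Y_m|$. Since the $T_{i,i-1}^{0,\theta}$ are independent centred exponentials, $(Y_m)$ is a centred backward martingale in $m$ with $\mathrm{Var}(Y_m)=\sum_{i>m}\mu_i^{-2}=O(m^{-3})$ and increments having moments of every order, namely $\E[|T_{i,i-1}^{0,\theta}-\mu_i^{-1}|^k]=O(i^{-2k})$.

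The technical core is a weighted maximal inequality for $\sup_m(m+1)|Y_m|$, which I would establish by dyadic blocking in $m$. On the block $\{2^j\le m<2^{j+1}\}$ the weight is at most $2^{j+2}$, while Doob's $L^k$ maximal inequality for the backward martingale gives $\E[\sup_{m\ge 2^j}|Y_m|^k]\le C_k\E[|Y_{2^j}|^k]$. A Rosenthal (or Burkholder--Davis--Gundy) bound, using $\sum_{i>2^j}\mu_i^{-2}=O(2^{-3j})$ together with $\sum_{i>2^j}\E[|T_{i,i-1}^{0,\theta}-\mu_i^{-1}|^k]=O(2^{-(2k-1)j})$, yields $\E[|Y_{2^j}|^k]=O(2^{-3jk/2})$ for even $k$. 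Multiplying by the block weight $(2^{j+2})^k$ and summing over the blocks with $2^j\ge M$ gives $\E[\sup_{m\ge M}(m+1)^k|Y_m|^k]=O(M^{-k/2})$ for any deterministic $M$, and summing from $j=0$ gives the whole-range bound $\E[\sup_{m\ge 1}(m+1)^{2k}|Y_m|^{2k}]<\infty$. Combined with the $O(1/m)$ deterministic part, these give $\E[\sup_{m\ge M}A_m^k]=O(M^{-k/2})$ and a finite constant $K_k:=\E[\sup_{m\ge 1}A_m^{2k}]<\infty$.

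Finally I would remove the random lower endpoint. Choosing a deterministic $M=M(t)=\lfloor 1/t\rfloor\to\infty$ as $t\to0$ and splitting on $\{N_t^{0,\theta}\ge M\}$, on this event $\sup_{m\ge N_t^{0,\theta}-1}A_m^k\le\sup_{m\ge M-1}A_m^k$, whose expectation is $O(M^{-k/2})\to0$. On the complementary event Cauchy--Schwarz gives $\E[\sup_{m\ge 1}A_m^k\,\mathbbm{1}_{\{N_t^{0,\theta}<M\}}]\le K_k^{1/2}\,\mathbb{P}(N_t^{0,\theta}<M)^{1/2}$, and since $tN_t^{0,\theta}/2\to1$ in probability by Proposition~\ref{prop4.1} we have $\mathbb{P}(N_t^{0,\theta}<\lfloor 1/t\rfloor)\to0$, so this term also vanishes; letting $t\to0$ proves the claim. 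I expect the weighted maximal inequality of the third paragraph to be the main obstacle: because the weight $(m+1)^k$ grows while the martingale $Y_m$ decays, no single maximal inequality applies, and one must trade the polynomial weight against the $O(m^{-3k/2})$ decay of the martingale moments through the dyadic decomposition. A secondary difficulty is that the supremum is taken over levels above the \emph{random} value $N_t^{0,\theta}$, which is precisely what necessitates the truncation in this final step.
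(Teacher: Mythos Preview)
Your argument is correct but takes a substantially different route from the paper's. The paper's proof is very short: it cites \cite[Theorem~2]{berestycki2010}, which already gives the full statement for $\theta=0$, and then uses the pathwise inequality $N_s^{0,\theta}\le N_s^{0,0}$ from the PRM coupling to obtain the uniform bound $\E[\sup_{s\le t}(sN_s^{0,\theta}/2)^k]<\infty$. With that dominating variable in hand, the almost-sure convergence from Proposition~\ref{prop4.1} (pushed through the continuous mapping $x\mapsto\sup_{s\le t}|x_s|$ from the right) plus dominated convergence finishes the argument in a few lines.

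Your approach, by contrast, is entirely self-contained: you discretise the supremum via the hitting times, split $T_m$ into mean plus a backward martingale of centred exponentials, and then control the weighted supremum $\sup_m (m{+}1)|Y_m|$ by dyadic blocks with Doob and Rosenthal bounds. This works, and it has the merit of not relying on the external $\theta=0$ result or on the coupling; in effect you are reproving (and slightly generalising) the Berestycki--Berestycki--Limic estimate from scratch. The cost is considerable extra length, and the weighted maximal inequality you flag as the main obstacle is indeed the technical heart. The final truncation on $\{N_t^{0,\theta}\ge\lfloor 1/t\rfloor\}$ is a nice way to handle the random lower endpoint, whereas the paper sidesteps this entirely by having a dominating variable that does not depend on $t$ except through the range of the supremum.
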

\begin{proof}
  Using \cite[Theorem 2]{berestycki2010} with $d=k$ on $N_s^{0,0}$, there exists a time $t_1>0$ such that
  \[\E\left[\sup_{s\leq t_1}\left(\frac{sN_s^{0,0}}{2}-1\right)^k\right]\leq 1.\]
  This combined with the fact that $(sN_s^{0,0}/2)^k$ can grow at most like $s^k$ for $s>t_1$ ensures that we have
  \[\E\left[\sup_{s\leq t}\left(\frac{s}{2}N_s^{0,0}\right)^k\right]<\infty,\]
  for all $t\geq0$, $k\in\mathbb{N}$. This now immediately gives us
  \begin{equation}
    \E\left[\sup_{s\leq t}\left(\frac{s}{2}N_s^{0,\theta}\right)^k\right]<\infty,\label{Kingman sup bound k}
  \end{equation}
  for all $k\in\mathbb{N}$ thanks to \eqref{PRM Ineq Kingman to Mutation}.
 In order to establish \eqref{Kingman with Mutation E sup limit k} we first show the almost sure convergence of
 \begin{equation}
     \sup_{s\leq t}\left(\frac{sN_s^{0,\theta}}{2}-1\right)^k\label{sup ASG process k}
 \end{equation}
 to zero as $t\to 0$. This is easily done with an application of the continuous mapping theorem and \eqref{Kingman with mutation speed of CDI}; in fact since we are considering a right limit we rely only on the right continuity of the function and the supremum from the right. With this, we then bound \eqref{sup ASG process k} above by $\sup_{s\leq t}(sN_s^{0,\theta}/2)^k\vee 1$ and use the dominated convergence theorem together with \eqref{Kingman sup bound k}
 to obtain \eqref{Kingman with Mutation E sup limit k}.
\end{proof}
In order to obtain the same result for the ASG we need to take a closer look at the behaviour of the process $(sN_s^{\sigma,\theta}/2)^k$. We consider what happens to this process between successive hitting times $T_n^{\sigma,\theta}$ and $T_{n-1}^{\sigma,\theta}$. Between these times the process will sit on one of the curves in Figure \ref{figure}.
\begin{figure}[ht]
  \centerline{\includegraphics[scale=.25]{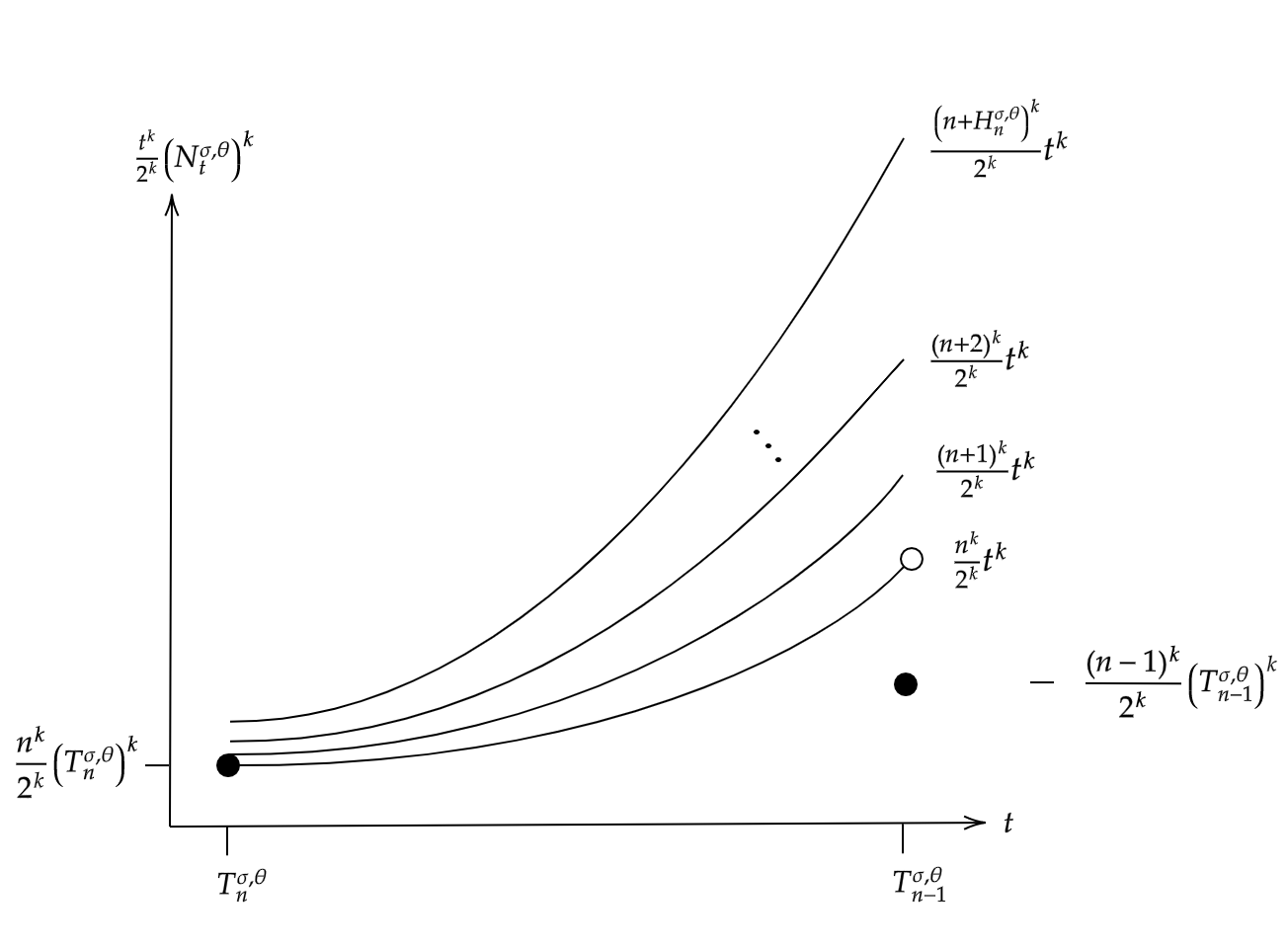}}
  \caption{\label{figure}Possible trajectories of the function $(sN_s^{\sigma,\theta}/2)^k$ between the first hitting times of level $n$ and $n-1$.}
\end{figure}
Here the random variable $H_n^{\sigma,\theta}$ is defined as
\[H_n^{\sigma,\theta}:=\#\{s\in[T_n^{\sigma,\theta},T_{n-1}^{\sigma,\theta}):N_s^{\sigma,\theta}-N_{s-}^{\sigma,\theta}>0\}\]
 or simply the number of times the ASG jumps up in this time window. From the diagram we see that the highest point that the process can reach is
\begin{equation}
  \frac{\left(n+H_n^{\sigma,\theta}\right)^k}{2^k}\left(T_{n-1}^{\sigma,\theta}\right)^k=\sum_{j=0}^k\binom{k}{j}\frac{n^{k-j}}{2^k}\left(H_n^{\sigma,\theta}\right)^j\left(T_{n-1}^{\sigma,\theta}\right)^k.\label{Bound with Hn,n and Tn}
\end{equation}
We can now proceed by looking at the supremum of the above random variables over all $n$ since
\begin{equation}
  \sup_{s\leq t}\left(\frac{sN_s^{\sigma,\theta}}{2}\right)^k\leq \sup_{n\in\mathbb{N}}\frac{\left(n+H_n^{\sigma,\theta}\right)^k}{2^k}\left(T_{n-1}^{\sigma,\theta}\right)^k,\label{From process to hitting times ineq}
\end{equation}
for any $t\geq 0$. Before stating our result however, we need to know more about the moments of $H_n^{\sigma,\theta}$. In the proof of \cite[Lemma 4.1]{bansaye2016speed} it is shown, for a general birth/death process $N_t$ with birth/death rates $\lambda_n$ and $\mu_n$, that if \eqref{Limsup Birth/Death rates condition} holds then the associated $H_n$ random variable
\[H_n:=\#\{s\in[T_n,T_{n-1}):N_s-N_{s-}>0\}\]
satisfies
\[\E_\infty\left[H_n^2\right]\leq C\frac{\lambda_n}{\mu_n},\]
for some constant $C\geq0$, for all $n\in\N$. Since we need to consider higher moments of $H_n^{\sigma,\theta}$, this inequality is not enough. As such, we extend the above inequality to all moments of $H_n$ for general birth/death processes:
\begin{lemma}\label{lemma4.4}
Let $(N_t)_{t\geq0}$ be a birth/death process with birth/death rates $\lambda_n$ and $\mu_n$ respectively. If \eqref{Limsup Birth/Death rates condition} holds then for each $k\in\N$ there exists $c_k>0$ such that
\begin{equation}
  \E_\infty\left[H_n^k\right]\leq c_k\frac{\lambda_n}{\mu_n},~~n\geq1.\label{H_n kth moment bound}
\end{equation}
\end{lemma}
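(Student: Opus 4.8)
The plan is to run a first-step analysis on the up-jump counts $H_n$, in direct analogy with the hitting-time recursion \eqref{Xi hitting time recursion} used in Lemma \ref{lemma3.1}. Applying the strong Markov property at the first time the process reaches level $n$, one obtains the distributional recursion
\[
H_n \overset{d}{=} \mathbbm{1}_{E_n}\left(1 + \widehat{H}_{n+1} + \widetilde{H}_n\right),
\]
where $E_n$ is the event that the first jump out of $n$ is upwards, so $\mathbb{P}(E_n)=p_n:=\lambda_n/(\lambda_n+\mu_n)$, and $\widehat{H}_{n+1}$, $\widetilde{H}_n$ are independent copies of $H_{n+1}$, $H_n$, independent of $E_n$: if the first jump is down we reach $n-1$ with no up-jumps; if it is up we record one up-jump, accrue $\widehat{H}_{n+1}$ up-jumps during the excursion above $n$ before returning to $n$, and then begin afresh, accruing $\widetilde{H}_n$ further up-jumps. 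Writing $h_{n,k}:=\E_\infty[H_n^k]$ (which coincides with the expectation started from $n$, since $H_n$ depends only on the excursion from $n$ to $n-1$), the base case $k=1$ is immediate: taking expectations gives $h_{n,1}=p_n(1+h_{n+1,1}+h_{n,1})$, whence $h_{n,1}=\tfrac{\lambda_n}{\mu_n}(1+h_{n+1,1})$. Fixing $r\in(\limsup_n\lambda_n/\mu_n,\,1)$ and $n_0$ with $\lambda_n/\mu_n\leq r$ for $n\geq n_0$, a supremum argument gives $\sup_{n\geq n_0}h_{n,1}\leq r/(1-r)$, so that $h_{n,1}\leq\tfrac{\lambda_n}{\mu_n}\cdot\tfrac{1}{1-r}$, which is \eqref{H_n kth moment bound} with $c_1=1/(1-r)$.

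For the inductive step I would assume $h_{n,j}\leq c_j\lambda_n/\mu_n$ for all $n$ and all $j<k$. Raising the recursion to the $k$-th power, expanding by the multinomial theorem and using the independence of $\widehat{H}_{n+1}$ and $\widetilde{H}_n$ yields
\[
h_{n,k}=p_n\sum_{a+b+c=k}\binom{k}{a,b,c}\,h_{n+1,b}\,h_{n,c},
\]
with the convention $h_{\cdot,0}=1$. Isolating the $c=k$ term, which equals $p_n h_{n,k}$, and rearranging produces $h_{n,k}=\tfrac{\lambda_n}{\mu_n}R_{n,k}$, where $R_{n,k}$ collects the terms with $c<k$. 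Within $R_{n,k}$ I would single out the one resonant summand $h_{n+1,k}$ (from $b=k$, $a=c=0$); every remaining summand contains a factor $h_{n+1,b}$ or $h_{n,c}$ with index strictly below $k$, which by the induction hypothesis is $O(\lambda_n/\mu_n)$ and hence bounded uniformly for $n\geq n_0$. Thus $R_{n,k}\leq h_{n+1,k}+S_k$ for a constant $S_k$, giving $h_{n,k}\leq r(h_{n+1,k}+S_k)$; the same supremum argument as in the base case bounds $\sup_{n\geq n_0}h_{n,k}$ and then yields $h_{n,k}\leq\tfrac{\lambda_n}{\mu_n}\cdot S_k/(1-r)$ for $n\geq n_0$. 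Enlarging the constant to absorb the finitely many $n<n_0$ (where $\lambda_n/\mu_n>0$ unless $\lambda_n=0$, in which case $H_n\equiv0$) closes the induction.

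Two points require care. First is the a priori finiteness of all $h_{n,k}$, needed to justify subtracting $p_nh_{n,k}$ from both sides: for $n\geq n_0$, $H_n$ is stochastically dominated by the up-jump count of an excursion of the homogeneous walk $\Xi$ from Lemma \ref{lemma3.1} with constant up-probability $p<1/2$ from \eqref{p def}, and that count satisfies the same recursion with top-order coefficient $2p<1$, so the identical induction as in Lemma \ref{lemma3.1} gives finiteness of all its moments, hence of $h_{n,k}$; finiteness for $n<n_0$ then follows by downward induction since $\mu_n>0$ forces $p_n<1$. The genuine obstacle, however, is the uniform control of $R_{n,k}$: one must verify that after extracting the single resonant term $h_{n+1,k}$, \emph{all} remaining summands are truly $O(1)$ uniformly in $n$. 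This is exactly where the inductive hypothesis that lower moments carry the factor $\lambda_n/\mu_n$ is indispensable, and it is what forces the clean single power $\lambda_n/\mu_n$ (rather than a larger power) in the final bound \eqref{H_n kth moment bound}.
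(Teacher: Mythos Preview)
Your proof is correct and follows the same route as the paper: both obtain the multinomial identity
\[
\frac{\mu_n}{\lambda_n}\,h_{n,k}=\sum_{\substack{a+b+c=k\\(a,b,c)\neq(0,0,k)}}\binom{k}{a,b,c}\,h_{n+1,b}\,h_{n,c}
\]
from the first-step recursion for $H_n$ (the paper derives it by differentiating the Laplace transform recursion $\hat G_n(a)=\tfrac{\mu_n}{\lambda_n+\mu_n}+\tfrac{\lambda_n}{\lambda_n+\mu_n}e^{-a}\hat G_n(a)\hat G_{n+1}(a)$ a total of $k$ times, while you expand directly), and then bound the right-hand side uniformly in $n\geq n_0$. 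The only substantive difference is in that bounding step: the paper simply observes that every moment $h_{n,m_2}$ and $h_{n+1,m_3}$ appearing on the right, including the top-order $h_{n+1,k}$, is dominated by the corresponding moment of the homogeneous walk $T$ from Lemma~\ref{lemma3.1}, so the whole sum is bounded by a constant in one stroke and no induction on $k$ is needed. Your induction-plus-supremum argument is a valid alternative, but the stochastic domination you already invoke for finiteness would let you skip it.
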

\begin{proof}
We follow a similar method of proof as in \cite[Lemma 4.1]{bansaye2016speed} by first noting the random variable $H_n$ is simply the number of positive jumps before $T_{n-1}$ of a discrete-time random walk started at $n$ with transition probabilities $p_{i,i+1}=\lambda_i/(\lambda_i+\mu_i)$ and $p_{i,i-1}=\mu_i/(\lambda_i+\mu_i)$.

Next, we note that Lemma \ref{lemma3.1} also holds in the discrete-time case and so since \eqref{Limsup Birth/Death rates condition} holds we have that $H_n$, for $n\geq n_0$, is stochastically dominated by $T$, the hitting time of $n-1$ by a discrete-time simple random walk started at $n$ with up/down transition probabilities $p$, $1-p$ respectively; note that this is the same $p$ from \eqref{p def}. Since $p<1/2$, all moments of $T$---and thus $H_n$ for $n\geq n_0$---are finite. Finiteness of moments of $H_n$ for $n<n_0$ can be obtained through the following recursive relationship:
\[\E_\infty [H_n]=\frac{\lambda_n}{\lambda_n+\mu_n}(1+\E_\infty[H_{n+1}]+\E_\infty[H_n]).\]
We now consider the Laplace transform of $H_n$, denoted $\hat{G}_n(a):=\E_\infty\left[\text{exp}(-aH_n)\right]$, and the recursion formula (4.3) from \cite{bansaye2016speed}:
\begin{equation*}
  \hat{G}_n(a)=\frac{\mu_n}{\lambda_n+\mu_n}+\frac{\lambda_n}{\lambda_n+\mu_n}\text{exp}(-a)\hat{G}_n(a)\hat{G}_{n+1}(a), \qquad a\geq 0,\, n \geq 1.
\end{equation*}
Differentiating $k$ times gives us
\begin{equation*}
  \hat{G}_n^{(k)}(a)=\frac{\lambda_n}{\lambda_n+\mu_n}\sum_{\substack{|\boldsymbol{m}|=k\\\boldsymbol{m}\in\mathbb{N}_0^3}}\binom{k}{\boldsymbol{m}}(-1)^{m_1}\text{exp}(-a)\hat{G}_n^{(m_2)}(a)\hat{G}_{n+1}^{(m_3)}(a),
\end{equation*}
where $\N_0=\N\cup\{0\}$. If we now evaluate the above at $a=0$ we get
\begin{equation*}
  \E_\infty\left[H_n^k\right]=\frac{\lambda_n}{\lambda_n+\mu_n}\sum_{\substack{|\boldsymbol{m}|=k\\\boldsymbol{m}\in\mathbb{N}_0^3}}\binom{k}{\boldsymbol{m}}\E_\infty\left[H_n^{m_2}\right]\E_\infty\left[H_{n+1}^{m_3}\right],
\end{equation*}
since a factor of $(-1)^k$ can be cancelled from both sides. Rearranging this we see that
\[\frac{\mu_n}{\lambda_n}\E_\infty\left[H_n^k\right]=\sum_{\substack{|\boldsymbol{m}|=k\\\boldsymbol{m}\neq (0,k,0)}}\binom{k}{\boldsymbol{m}}\E_\infty\left[H_n^{m_2}\right]\E_\infty\left[H_{n+1}^{m_3}\right].\]
Now, the right hand side of this equation is a finite combination of moments; each of which are bounded by the respective moment of $T$. Thus the right hand side can be bounded by some constant $c_k\geq0$. This then gives us \eqref{H_n kth moment bound}.
\end{proof}
With this we now know enough about the asymptotic behaviour of the random variables in \eqref{Bound with Hn,n and Tn} to state the following:
\begin{proposition}\label{prop4.5}
  \begin{equation}
    \lim_{t\to0}\E\left[\sup_{s\leq t}\left(\frac{sN_s^{\sigma,\theta}}{2}-1\right)^k\right]=0,\qquad\forall k\in\mathbb{N}.\label{ASG E sup limit k}
  \end{equation}
\end{proposition}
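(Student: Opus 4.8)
The plan is to follow the three‑part structure of the proof of Proposition \ref{Kingman sup lim prop}: first establish that the supremum converges to zero almost surely, then produce an integrable random variable dominating it, and finally conclude by dominated convergence. The only genuinely new work lies in the domination step, because we can no longer bound $N^{\sigma,\theta}$ from above by the pure coalescent $N^{0,0}$ through the coupling of Section \ref{Section2}. For the almost sure convergence I would argue exactly as before: Proposition \ref{prop4.2} gives $sN_s^{\sigma,\theta}/2\to1$ almost surely as $s\to0$, so $(sN_s^{\sigma,\theta}/2-1)^k\to0$; since this concerns the right limit at $0$, right‑continuity of the paths upgrades this to $\sup_{s\leq t}(sN_s^{\sigma,\theta}/2-1)^k\to0$ almost surely as $t\to0$.

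For the domination step, the task reduces, via inequality \eqref{From process to hitting times ineq} and the elementary bound $(x-1)^k\leq x^k\vee1$ for $x\geq0$, to showing
\[
\E\left[\sup_{n\in\mathbb{N}}\frac{(n+H_n^{\sigma,\theta})^k}{2^k}\left(T_{n-1}^{\sigma,\theta}\right)^k\right]<\infty .
\]
I would expand $(n+H_n^{\sigma,\theta})^k=\sum_{j=0}^k\binom{k}{j}n^{k-j}(H_n^{\sigma,\theta})^j$ and use $\sup_n\sum_j\leq\sum_j\sup_n$ to split this supremum into a main term ($j=0$) and correction terms ($1\leq j\leq k$). Each correction term I would bound by its sum over $n$, estimating the summands with Cauchy--Schwarz: Lemma \ref{lemma4.4} gives $\E[(H_n^{\sigma,\theta})^{2j}]=O(\lambda_n/\mu_n)=O(n^{-1})$, while Proposition \ref{prop3.5} together with Lemma \ref{Kingman Hitting Time Asymptotics Lemma} gives $\E[(T_{n-1}^{\sigma,\theta})^{2k}]=O(n^{-2k})$. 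Hence $n^{k-j}\E[(H_n^{\sigma,\theta})^j(T_{n-1}^{\sigma,\theta})^k]=O(n^{-j-1/2})$, which is summable for every $j\geq1$, so the correction terms contribute a finite expectation.

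The main term $\E[\sup_n(nT_{n-1}^{\sigma,\theta}/2)^k]$ is the crux and, I expect, the principal obstacle. Here the naive bound $\sup_n\leq\sum_n$ fails outright: by the speed of coming down from infinity \eqref{ASG Speed of CDI} each $nT_{n-1}^{\sigma,\theta}$ converges to $2$, so $\E[(nT_{n-1}^{\sigma,\theta})^k]=O(1)$ and the resulting series diverges harmonically. What saves the supremum is that $nT_{n-1}^{\sigma,\theta}$ \emph{concentrates} around its limit, and one must quantify this. I would write $T_{n-1}^{\sigma,\theta}=\sum_{i\geq n}T_{i,i-1}^{\sigma,\theta}$, a sum of \emph{independent} first‑passage times by the strong Markov property, whose moments obey $\E[(T_{i,i-1}^{\sigma,\theta})^m]\leq B_m/i^{2m}$ by Proposition \ref{Tn,n-1 comparison Prop} and \eqref{Kingman Tn,n-1 bound}. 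A concentration (Rosenthal‑type) inequality then yields, for the centred fluctuation $F_n:=n(T_{n-1}^{\sigma,\theta}-\E[T_{n-1}^{\sigma,\theta}])$, a bound $\E[|F_n|^{2p}]=O(n^{-p})$ for each $p$; choosing $p$ large and applying a union bound over $n$ produces summable tails for $\sup_{n\geq n_0}|F_n|$ and hence $\E[\sup_{n\geq n_0}(nT_{n-1}^{\sigma,\theta})^k]<\infty$, the finitely many remaining levels $n<n_0$ being controlled by Lemma \ref{lemma3.1}. A softer alternative, and the one that explains the role of Proposition \ref{Kingman sup lim prop}, is to compare $T_{n-1}^{\sigma,\theta}$ to its neutral counterpart $T_{n-1}^{0,\theta}$ (for which the corresponding supremum is already finite) at the cost of bounding the nonnegative difference $T_{n-1}^{\sigma,\theta}-T_{n-1}^{0,\theta}$; controlling the supremum of this difference is exactly the delicate point, and is where the sharper estimates of Section \ref{Section3} are needed.

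Finally I would assemble the pieces. The previous two steps furnish a single integrable random variable dominating $\sup_{s\leq t}(sN_s^{\sigma,\theta}/2-1)^k$ uniformly for all small $t$ (using that $\sup_{s\leq t}$ is nondecreasing in $t$), together with the almost sure convergence of this quantity to $0$; dominated convergence then delivers \eqref{ASG E sup limit k}. I expect the concentration estimate for the main term to be the decisive and most technical step, since it is the one place where the $O(1)$ size of the individual window maxima forces a genuine fluctuation argument rather than a term‑by‑term summation.
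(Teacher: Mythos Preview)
Your proposal is correct and matches the paper's proof in overall architecture and in the treatment of the correction terms $j\geq1$. The difference lies in the main term $\E[\sup_n(nT_{n-1}^{\sigma,\theta}/2)^k]$. Your primary suggestion is a direct concentration argument via Rosenthal's inequality applied to the independent sum $T_{n-1}^{\sigma,\theta}=\sum_{i\geq n}T_{i,i-1}^{\sigma,\theta}$; this works, and indeed yields $\E[|F_n|^{2p}]=O(n^{-p})$ as you claim, so that $\sup_n|F_n|$ has all moments finite and the deterministic part $n\E[T_{n-1}^{\sigma,\theta}]$ is bounded by Proposition~\ref{prop3.5}.

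The paper instead follows precisely the ``softer alternative'' you sketch: it writes
\[
\sup_{n}\frac{n^k}{2^k}\bigl(T_{n-1}^{\sigma,\theta}\bigr)^k\leq\sup_{n}\Bigl[\frac{n^k}{2^k}\bigl(T_{n-1}^{\sigma,\theta}\bigr)^k-\frac{n^k}{2^k}\bigl(T_{n-1}^{0,\theta}\bigr)^k\Bigr]+\sup_{n}\frac{n^k}{2^k}\bigl(T_{n-1}^{0,\theta}\bigr)^k,
\]
bounding the second supremum via Proposition~\ref{Kingman sup lim prop} (these are the right endpoints of the continuous pieces of $(sN_s^{0,\theta}/2)^k$) and controlling the first by showing that its increments in $n$ are absolutely summable. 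For this the paper expands the increment using $T_{n-1}=T_n+T_{n,n-1}$, invokes the coupling \eqref{PRM Ineq Mutation to Selection} to drop absolute values (so that differences such as $(T_n^{\sigma,\theta})^k-(T_n^{0,\theta})^k$ are nonnegative), and then applies Propositions~\ref{Tn,n-1 comparison Prop} and~\ref{prop3.5} to obtain an $O(n^{-2})$ bound on the expected increment. Your Rosenthal route is more self-contained and would bypass the need for Proposition~\ref{Kingman sup lim prop} altogether; the paper's route, by contrast, makes explicit use of the pathwise coupling and the sharp comparison estimates of Section~\ref{Section3}, which is consistent with the paper's overall theme of reducing ASG quantities to their neutral analogues.
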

\begin{proof}
  The first thing to note is that as soon as the right hand side of \eqref{From process to hitting times ineq} is bounded in expectation the result follows thanks to an application of \eqref{ASG Speed of CDI} and the dominated convergence theorem in a similar way to the proof of Proposition \ref{Kingman sup lim prop}.

  We start by looking at the terms on the right hand side of \eqref{Bound with Hn,n and Tn} for $j\neq0$. The aim here is to show that the expectations of these random variables are summable in $n$ and hence their supremum over $n$ has finite mean. Applying \eqref{H_n kth moment bound}, \eqref{ASG T_n Kingman comparison Ineq}, and the Cauchy-Schwarz inequality to these terms we obtain the following:
  \begin{align*}
    \E\left[\binom{k}{j}\frac{n^{k-j}}{2^k}\left(H_n^{\sigma,\theta}\right)^j\left(T_{n-1}^{\sigma,\theta}\right)^k\right]&\leq\binom{k}{j}\frac{n^{k-j}}{2^k}\E\left[\left(H_n^{\sigma,\theta}\right)^{2j}\right]^{1/2}\E\left[\left(T_{n-1}^{\sigma,\theta}\right)^{2k}\right]^{1/2}\\
    &\leq\binom{k}{j}\frac{n^{k-j}}{2^k}\frac{c_{2j}^{1/2}}{n^{1/2}}\left(\frac{C_{2k}^{0,\theta}}{n^{2k}}+\frac{C_{2k}^{\sigma,\theta}}{n^{2k+1}}\right)^{1/2}\\
    &=O\left(n^{-j-1/2}\right).
  \end{align*}
  This then gives us that the expectation of the terms on the right hand side of \eqref{Bound with Hn,n and Tn} are summable in $n$ as long as $j\neq0$. Thus we have that their supremum over $n$ is finite in expectation. 
  
  Next we need to deal with the first term on the right hand side of \eqref{Bound with Hn,n and Tn}; i.e.\ the term corresponding to $j=0$. Since
  \[\sup_{n\in\mathbb{N}}\frac{n^k}{2^k}\left(T_{n-1}^{\sigma,\theta}\right)^k\leq\sup_{n\in\mathbb{N}}\left[\frac{n^k}{2^k}\left(T_{n-1}^{\sigma,\theta}\right)^k-\frac{n^k}{2^k}\left(T_{n-1}^{0,\theta}\right)^k\right]+\sup_{n\in\mathbb{N}}\frac{n^k}{2^k}\left(T_{n-1}^{0,\theta}\right)^k\]
it suffices to show that the right hand side of the above is finite in expectation. For the second term this follows immediately from \eqref{Kingman sup bound k} since $n^k(T_{n-1}^{0,\theta})^k/2^k$ are the right end points of the continuous pieces of the process $t^k(N_t^{0,\theta})^k/2^k$. For the first term we consider the increments in $n$ of these random variables and show they are absolutely summable. This gives the desired result since, for any real sequence $x_n$,
\[\sup_{n\in\mathbb{N}}x_n\leq x_1+\sum_{n\geq1}|x_{n+1}-x_n|.\]
For us, the corresponding increment is
\begin{align}
  &\left|\frac{(n+1)^k}{2^k}\left(T_{n}^{\sigma,\theta}\right)^k-\frac{(n+1)^k}{2^k}\left(T_{n}^{0,\theta}\right)^k-\frac{n^k}{2^k}\left(T_{n-1}^{\sigma,\theta}\right)^k+\frac{n^k}{2^k}\left(T_{n-1}^{0,\theta}\right)^k\right|\nonumber\\
  &=\left|\left(\frac{(n+1)^k}{2^k}\left(T_{n}^{\sigma,\theta}\right)^k-\frac{n^k}{2^k}\left(T_{n-1}^{\sigma,\theta}\right)^k\right)-\left(\frac{(n+1)^k}{2^k}\left(T_{n}^{0,\theta}\right)^k-\frac{n^k}{2^k}\left(T_{n-1}^{0,\theta}\right)^k\right)\right|.\label{Increments Formula}
\end{align}
Now, the terms inside the brackets are just the difference between successive right endpoints of continuous pieces of the processes $\left(sN_s^{\sigma,\theta}/2\right)^k$ and $\left(sN_s^{0,\theta}/2\right)^k$ respectively. These can be expressed in the following way:
\begin{align}
  \lefteqn{\frac{(n+1)^k}{2^k}\left(T^{\sigma,\theta}_n\right)^k-\frac{n^k}{2^k}\left(T^{\sigma,\theta}_{n-1}\right)^k} \qquad \nonumber\\
  &= \left[(n+1)^k-n^k\right]\frac{\left(T^{\sigma,\theta}_n\right)^k}{2^k} -\frac{n^k}{2^k}\left(\left(T^{\sigma,\theta}_{n-1}\right)^k-\left(T^{\sigma,\theta}_n\right)^k\right)\nonumber\\
  &=\frac{\left(T^{\sigma,\theta}_n\right)^k}{2^k}\left(\sum_{j=1}^k\binom{k}{j}\frac{n^{k-j}}{2^k}\right) -\frac{n^k}{2^k}\left(\sum_{j=1}^k\binom{k}{j}\left(T^{\sigma,\theta}_n\right)^{k-j}\left(T^{\sigma,\theta}_{n,n-1}\right)^j\right), \label{Right endpoint difference}
\end{align}
since $(T^{\sigma,\theta}_{n-1})^k=(T^{\sigma,\theta}_n+T^{\sigma,\theta}_{n,n-1})^k$. Note that one can set $\sigma=0$ in the above meaning these equalities apply to both the Kingman coalescent and the ASG. Using the substitution \eqref{Right endpoint difference}, \eqref{Increments Formula} is then equal to
\begin{align}
  &\left|\sum_{j=1}^k\binom{k}{j}\frac{n^{k-j}}{2^k}\left[\left(T_n^{\sigma,\theta}\right)^k-\left(T_n^{0,\theta}\right)^k\right]\right.\nonumber\\
  &{}-\frac{n^k}{2^k}\left.\sum_{j=1}^k\binom{k}{j}\left(\left(T_n^{\sigma,\theta}\right)^{k-j}\left(T_{n,n-1}^{\sigma,\theta}\right)^j-\left(T_n^{0,\theta}\right)^{k-j}\left(T_{n,n-1}^{0,\theta}\right)^j\right)\right|\nonumber\\
  \leq{}&\sum_{j=1}^k\binom{k}{j}\frac{n^{k-j}}{2^k}\left|\left(T_n^{\sigma,\theta}\right)^k-\left(T_n^{0,\theta}\right)^k\right|\label{Increments Upper Bound}\\
  &{}+\frac{n^k}{2^k}\sum_{j=1}^k\binom{k}{j}\left|\left(T_n^{\sigma,\theta}\right)^{k-j}\left(T_{n,n-1}^{\sigma,\theta}\right)^j-\left(T_n^{0,\theta}\right)^{k-j}\left(T_{n,n-1}^{0,\theta}\right)^j\right|.\nonumber
\end{align}
If we now apply expectations to \eqref{Increments Upper Bound}, then the expectation of \eqref{Increments Formula} is bounded by
\begin{align*}
  \E&\left[\Big|\frac{(n+1)^k}{2^k}\left(T_{n}^{\sigma,\theta}\right)^k-\frac{(n+1)^k}{2^k}\left(T_{n}^{0,\theta}\right)^k-\frac{n^k}{2^k}\left(T_{n-1}^{\sigma,\theta}\right)^k+\frac{n^k}{2^k}\left(T_{n-1}^{0,\theta}\right)^k\Big|\right]\\
  \leq{}&\sum_{j=1}^k\binom{k}{j}\frac{n^{k-j}}{2^k}\E\left[\Big|\left(T_n^{\sigma,\theta}\right)^k-\left(T_n^{0,\theta}\right)^k\Big|\right]\\
  &{}+\frac{n^k}{2^k}\sum_{j=1}^k\binom{k}{j}\E\left[\Big|\left(T_n^{\sigma,\theta}\right)^{k-j}\left(T_{n,n-1}^{\sigma,\theta}\right)^j-\left(T_n^{0,\theta}\right)^{k-j}\left(T_{n,n-1}^{0,\theta}\right)^j\Big|\right]\\
  \leq{}&\sum_{j=1}^k\binom{k}{j}\frac{n^{k-j}}{2^k}\E\left[\Big|\left(T_n^{\sigma,\theta}\right)^k-\left(T_n^{0,\theta}\right)^k\Big|\right]\\
  &{}+\frac{n^k}{2^k}\sum_{j=1}^k\binom{k}{j}\E\left[\left(T_n^{\sigma,\theta}\right)^{k-j}\Big|\left(T_{n,n-1}^{\sigma,\theta}\right)^j-\left(T_{n,n-1}^{0,\theta}\right)^j\Big|\right]\\
  &{}+\frac{n^k}{2^k}\sum_{j=1}^k\binom{k}{j}\E\left[\left(T_{n,n-1}^{0,\theta}\right)^j\Big|\left(T_n^{\sigma,\theta}\right)^{k-j}-\left(T_n^{0,\theta}\right)^{k-j}\Big|\right].
\end{align*}
Recalling equations \eqref{PRM Ineq Kingman to Mutation} and \eqref{PRM Ineq Mutation to Selection} from Section \ref{Section2}, we see that on the joint probability space we have constructed the hitting times for the ASG are almost surely longer than the corresponding hitting times for the Kingman coalescent with mutation. This allows us to drop modulus signs from inside the above expectations. Combining this with \eqref{ASG T_n Kingman comparison Ineq}, \eqref{ASG Tn,n-1 kth moment ineqs}, and \eqref{Kingman T_n Asymptotics} results in the following inequality:
\begin{align*}
\lefteqn{\E\left[\Big|\frac{(n+1)^k}{2^k}\left(T_{n}^{\sigma,\theta}\right)^k-\frac{(n+1)^k}{2^k}\left(T_{n}^{0,\theta}\right)^k-\frac{n^k}{2^k}\left(T_{n-1}^{\sigma,\theta}\right)^k+\frac{n^k}{2^k}\left(T_{n-1}^{0,\theta}\right)^k\Big|\right]} \qquad\\
\leq {}& \sum_{j=1}^k\binom{k}{j}\frac{n^{k-j}}{2^k}\frac{C_k^{\sigma,\theta}}{n^{k+1}}\\
&{}+\frac{n^k}{2^k}\sum_{j=1}^k\binom{k}{j}\left[\left(\frac{C_{k-j}^{0,\theta}}{n^{k-j}}+\frac{C_{k-j}^{\sigma,\theta}}{n^{k-j+1}}\right)\frac{B_j^{\sigma,\theta}}{n^{2j+1}}+\frac{2^j}{n^j(n-1+\theta)^j}\frac{C_j^{\sigma,\theta}}{n^{k-j+1}}\right]\\
= {}& O(n^{-2}).
\end{align*}
From this we can conclude that the right hand side of \eqref{From process to hitting times ineq} is finite in expectation and so the result follows.
\end{proof}
\section{Proof of Theorem 1.1}\label{Section5}
In order to prove Theorem \ref{maintheorem} we extend the arguments of Limic and Talarczyk \cite{limic2015} with modifications where needed. Lemmas \ref{lemmaXdecomposition} and \ref{lemma5.3} are analogues of Lemmas 2.2 and 2.3 from \cite{limic2015} for the ASG. Though the methods of proof are similar, they rely heavily on our PRM construction of the ASG and the results from Section \ref{Section4}; most importantly Proposition \ref{prop4.5}. Regardless of the similarities we present the results here in full in order to keep the proof self-contained. 

Before establishing the key lemmas needed to prove Theorem \ref{maintheorem} we state a technical lemma from \cite{berestycki2010} that is used repeatedly in what follows.
\begin{lemma}[{\cite[Lemma 10]{berestycki2010}}]
  Suppose $f,g:[a,b]\rightarrow\mathbb{R}$ are c\`adl\`ag functions such that \[\sup_{x\in[a,b]}\limits\left|f(x)+\int_a^xg(u)\,{{\rm{d}}}u\right|\leq K,\]
  for some $K<\infty$. If, in addition $f(x)g(x)>0,~x\in[a,b]$, whenever $f(x)\neq 0$, then
  \begin{equation}
    \sup_{x\in[a,b]}\left|\int_a^xg(u)\,{{\rm{d}}}u\right|\leq K\text{ and}\sup_{x\in[a,b]}\left|f(x)\right|\leq2K.\label{TechnicalLemma}
  \end{equation}
\end{lemma}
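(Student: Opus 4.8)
Since this is quoted as a known result (\cite[Lemma 10]{berestycki2010}), the authors will presumably just cite it; but here is how I would prove it from scratch. Write $G(x):=\int_a^x g(u)\,{\rm d}u$. Because $g$ is c\`adl\`ag on the compact interval $[a,b]$ it is bounded, so $G$ is (absolutely) continuous with $G(a)=0$, and by hypothesis $\sup_{x\in[a,b]}|f(x)+G(x)|\le K$. The one observation that drives everything is a ``restoring sign'' property: at any $x$ with $G(x)>K$ the bound forces $f(x)\le K-G(x)<0$, so $f(x)\neq 0$, and then the hypothesis $f(x)g(x)>0$ gives $g(x)<0$; symmetrically, $G(x)<-K$ forces $g(x)>0$. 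The plan is to convert this into $\sup_{x}|G(x)|\le K$, after which $\sup_x|f(x)|\le 2K$ follows instantly.

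The key step is to prove $\sup_{x\in[a,b]} G(x)\le K$ by contradiction. Suppose $M:=\sup_{x\in[a,b]}G(x)>K$; by continuity and compactness the supremum is attained at some $x^{\ast}$. Let $\tau:=\sup\{x\le x^{\ast}:G(x)\le K\}$, which is well defined and satisfies $\tau<x^{\ast}$ because $G(a)=0\le K<G(x^{\ast})$. The defining set is closed, so $G(\tau)\le K$, while $G(x)>K$ for every $x\in(\tau,x^{\ast}]$. On that half-open interval the restoring-sign property gives $g(x)<0$ pointwise, whence $G(x^{\ast})-G(\tau)=\int_\tau^{x^{\ast}}g(u)\,{\rm d}u<0$, so $M=G(x^{\ast})<G(\tau)\le K$, contradicting $M>K$. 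Applying the identical argument to the pair $(-f,-g)$, which preserves both hypotheses since $(-f)(-g)=fg$ and the relevant sum is merely $-(f+G)$, yields $\inf_{x}G(x)\ge -K$. Combining the two bounds gives $\sup_{x\in[a,b]}|G(x)|\le K$, the first conclusion of \eqref{TechnicalLemma}.

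The second conclusion then follows from the triangle inequality: $|f(x)|\le|f(x)+G(x)|+|G(x)|\le K+K=2K$ for every $x$. The only genuinely delicate point is the last-exit-time construction in the middle paragraph: one must verify that the supremum of $G$ is attained, that $\tau$ lies strictly below $x^{\ast}$, and that $G$ remains strictly above $K$ on the \emph{entire} interval $(\tau,x^{\ast}]$, so that integrating the pointwise-negative $g$ produces a strictly negative increment. Everything else (boundedness of $g$, continuity of $G$, the symmetry reduction, and the final triangle-inequality estimate) is routine, so I expect the contradiction via the restoring sign of $g$ to be the crux of the argument.
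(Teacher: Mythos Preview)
You are right that the paper does not prove this lemma: it is stated with the citation to \cite{berestycki2010} and then used without further comment. Your from-scratch argument is correct; the last-exit-time contradiction via the restoring sign of $g$ is exactly the natural route, and once $\sup_x|G(x)|\le K$ is secured the bound on $f$ is immediate from the triangle inequality.
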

The first lemma we establish gives a formula for $tN_t^{\sigma,\theta}/2$ which helps us in analysing its small time behaviour.
\begin{lemma} \label{lemmaXdecomposition}Under the assumptions of Theorem \ref{maintheorem} we have
  \begin{equation}
    \frac{tN_t^{\sigma,\theta}}{2}=1-\int_0^t\left(\frac{sN_s^{\sigma,\theta}}{2}-1\right)\frac{1}{s}\,{{\rm{d}}}s-M^{\sigma,\theta}_t+R^{\sigma,\theta}_t,~~~t\geq0,\label{New representation of tNt/2}
  \end{equation}
  where
  \begin{equation}
    M^{\sigma,\theta}_t=\frac{1}{2}\int_0^t\int_{\bar{\Delta}}s\mathbbm{1}_{\bar{\Delta}_{N_{s-}^{\sigma,\theta}}}(\boldsymbol{k})\left[\mathbbm{1}_{j>0}(\boldsymbol{k})-\mathbbm{1}_{j=0}(\boldsymbol{k})\right]\hat{\pi}({{\rm{d}}}s,{\rm{d}}\boldsymbol{k}),~~~t\geq0,\label{MartingaleFormula}
  \end{equation}
 and $R^{\sigma,\theta}$ is a continuous process such that for any $T>0$ there exists $C_1>0$ such that
  \begin{equation}
    \E\left[\sup_{s\leq t}|R^{\sigma,\theta}_s|\right]\leq C_1t,~~~t\leq T.\label{RProcessBound}
  \end{equation}
\end{lemma}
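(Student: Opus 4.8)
The plan is to derive the representation \eqref{New representation of tNt/2} directly from the PRM construction of $N_t^{\sigma,\theta}$ by applying an integration-by-parts (stochastic product rule) argument to the product $t N_t^{\sigma,\theta}$. Recall that the finite-$n$ approximation satisfies
\[
N_{t,n}^{\sigma,\theta}=n-\int_0^t\int_{\bar\Delta}\mathbbm{1}_{\bar\Delta_{N_{s-}^{\sigma,\theta}}}(\boldsymbol{k})\left[\mathbbm{1}_{j>0}(\boldsymbol{k})-\mathbbm{1}_{j=0}(\boldsymbol{k})\right]\pi({\rm{d}}s,{\rm{d}}\boldsymbol{k}),
\]
so that $N^{\sigma,\theta}$ is a pure-jump process driven by $\pi$. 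First I would split the driving measure $\pi$ into its compensator $\nu$ and the compensated measure $\hat\pi=\pi-\nu$. Integrating the deterministic $\nu$-part and computing the drift from the explicit intensity (each downward jump contributing rate $N_{s-}(N_{s-}-1+\theta)/2$ and each upward jump rate $\sigma N_{s-}/2$) isolates the compensated integral that becomes $M_t^{\sigma,\theta}$ in \eqref{MartingaleFormula}, while the $\hat\pi$-part of the product $t\,dN_t$ supplies the martingale term after multiplying by $s/2$.

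The key computation is the product rule: writing $d(sN_s^{\sigma,\theta})=N_{s-}^{\sigma,\theta}\,ds+s\,dN_s^{\sigma,\theta}$ (there is no cross-variation term since $t$ is continuous of finite variation), I would multiply through by $1/2$, integrate from $0$ to $t$, and substitute the PRM expression for $dN_s^{\sigma,\theta}$. The $N_{s-}^{\sigma,\theta}\,ds/2$ term, after adding and subtracting the constant $1$, rearranges into the deterministic integral $-\int_0^t\bigl(sN_s^{\sigma,\theta}/2-1\bigr)s^{-1}\,ds$ plus a remainder; the compensated part of $s\,dN_s^{\sigma,\theta}/2$ gives $-M_t^{\sigma,\theta}$; and the compensator (drift) part of $s\,dN_s^{\sigma,\theta}/2$, being a Lebesgue integral against the explicit rates, collects with the leftover pieces into the continuous finite-variation process $R_t^{\sigma,\theta}$. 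Concretely, the drift contributes $\int_0^t \tfrac{s}{2}\bigl(\sigma N_s^{\sigma,\theta}/2-N_s^{\sigma,\theta}(N_s^{\sigma,\theta}-1+\theta)/2\bigr)\,ds$; matching the leading $-s(N_s^{\sigma,\theta})^2/4$ against the deterministic integral term and collecting the lower-order linear-in-$N$ pieces (the $\sigma$, $\theta$, and $+1$ corrections) into $R_t^{\sigma,\theta}$ is exactly the algebra that produces \eqref{New representation of tNt/2}.

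To establish the bound \eqref{RProcessBound} I would read off $R_t^{\sigma,\theta}$ as an explicit Lebesgue integral whose integrand is a linear polynomial in $N_s^{\sigma,\theta}$ with coefficients depending on $\sigma$ and $\theta$, multiplied by $s/2$. Thus $\sup_{s\le t}|R_s^{\sigma,\theta}|$ is controlled by $\int_0^t c\,(1+sN_s^{\sigma,\theta})\,ds$ for a constant $c=c(\sigma,\theta)$, and taking expectations and using $\E[\sup_{s\le t}(sN_s^{\sigma,\theta}/2)]<\infty$ from Proposition \ref{prop4.5} (the $k=1$ case of \eqref{ASG E sup limit k}, together with its finiteness) gives $\E[\sup_{s\le t}|R_s^{\sigma,\theta}|]\le C_1 t$ for $t\le T$. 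The existence of the almost-sure limit $N_t^{\sigma,\theta}=\lim_n N_{t,n}^{\sigma,\theta}$, invoked to pass from the finite-$n$ representation to the limiting one, is guaranteed by the lookdown construction \cite{donnelly1999}.

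The main obstacle I anticipate is not the product rule itself but the justification that each integral appearing in the decomposition is well defined and finite near $t=0$, where $N_s^{\sigma,\theta}\uparrow\infty$. In particular, the compensated integral defining $M_t^{\sigma,\theta}$ must be shown to be a genuine (square-integrable, or at least local) martingale despite the intensity blowing up at $s=0$, and the deterministic integral $\int_0^t(sN_s^{\sigma,\theta}/2-1)s^{-1}\,ds$ must be shown to converge at its lower endpoint. Both are handled by the small-time control from Section \ref{Section4}: the moment bounds in Proposition \ref{prop4.5} ensure that $sN_s^{\sigma,\theta}/2-1\to0$ in $L^k$ fast enough that the $s^{-1}$ singularity is integrable, and the same bounds control the predictable quadratic variation of $M^{\sigma,\theta}$. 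I would therefore carry out the calculation first at the level of the finite-$n$ approximation $N_{t,n}^{\sigma,\theta}$—where all integrals are manifestly finite—and then pass to the limit $n\to\infty$ using the almost-sure convergence together with the uniform moment bounds to control the three terms.
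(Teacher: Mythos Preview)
Your overall strategy (product rule, split $\pi=\nu+\hat\pi$, work on $(r,t]$ or with finite $n$ and pass to the limit) matches the paper, but there is a genuine gap in the algebra and in the estimates.

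The remainder $R^{\sigma,\theta}_t$ is \emph{not} an integral of a linear polynomial in $N_s^{\sigma,\theta}$. When you combine the drift $\int_0^t\tfrac{N_s}{2}\,{\rm d}s-\int_0^t\tfrac{s}{4}N_s(N_s-1+\theta-\sigma)\,{\rm d}s$ with the target term $\int_0^t(\tfrac{s}{2}N_s-1)\tfrac{1}{s}\,{\rm d}s$, the two pieces $\int_0^t\tfrac{N_s}{2}\,{\rm d}s$ and $\int_0^t\tfrac{1}{s}\,{\rm d}s$ each diverge at $s=0$; you cannot separate them. The correct cancellation leaves
\[
R^{\sigma,\theta}_t=-\int_0^t\Bigl(\tfrac{s}{2}N_s^{\sigma,\theta}-1\Bigr)^2\frac{1}{s}\,{\rm d}s+\frac{1-\theta+\sigma}{4}\int_0^t sN_s^{\sigma,\theta}\,{\rm d}s,
\]
which contains a \emph{quadratic} term in $(\tfrac{s}{2}N_s-1)$ divided by $s$.

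This matters for the bound \eqref{RProcessBound}: to get $\E[\sup_{s\le t}|R_s^{\sigma,\theta}|]\le C_1 t$ you need $\E[(\tfrac{s}{2}N_s^{\sigma,\theta}-1)^2]\le C s$, i.e.\ a \emph{rate}, not merely convergence to zero. Proposition~\ref{prop4.5} gives only $\E[\sup_{s\le t}(\tfrac{s}{2}N_s-1)^k]\to 0$ with no rate, so it is insufficient both here and for the integrability at $s=0$ of $\int_0^t(\tfrac{s}{2}N_s-1)\tfrac{1}{s}\,{\rm d}s$. The paper closes this gap by a bootstrapping step: after writing the representation on $(r,t]$ and controlling $M^{\sigma,\theta}$ via \eqref{MgSupL2Bound}, it applies the technical Lemma~\eqref{TechnicalLemma} (Lemma~10 of \cite{berestycki2010}) with $f(s)=\tfrac{s}{2}N_s-1$ and $g(s)=\tfrac{N_s}{2}(\tfrac{s}{2}N_s-1)$ to upgrade Proposition~\ref{prop4.5} to the sharp estimate $\E[\sup_{s\le t}(\tfrac{s}{2}N_s-1)^2]\le C_3 t$. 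That estimate is what then controls both the quadratic part of $R^{\sigma,\theta}$ and the singular integral. Your proposal is missing this step.
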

\begin{proof}
  Letting $0<r\leq t$, the PRM construction in Section \ref{Section2} allows us to write the process as follows
  \begin{align*}
    N_t^{\sigma,\theta}={}&N_r^{\sigma,\theta}-\int_{(r,t]}\int_{\bar{\Delta}}\mathbbm{1}_{\bar{\Delta}_{N_{s-}^{\sigma,\theta}}}(\boldsymbol{k})\left[\mathbbm{1}_{j>0}(\boldsymbol{k})-\mathbbm{1}_{j=0}(\boldsymbol{k})\right]\pi({{\rm{d}}}s,{\rm{d}}\boldsymbol{k}),\\
    ={}&N_r^{\sigma,\theta}-\int_{(r,t]}\int_{\bar{\Delta}}\mathbbm{1}_{\bar{\Delta}_{N_{s-}^{\sigma,\theta}}}(\boldsymbol{k})\left[\mathbbm{1}_{j>0}(\boldsymbol{k})-\mathbbm{1}_{j=0}(\boldsymbol{k})\right]\nu({\rm{d}}s,{\rm{d}}\boldsymbol{k})\\
    &{}-\int_{(r,t]}\int_{\bar{\Delta}}\mathbbm{1}_{\bar{\Delta}_{N_{s-}^{\sigma,\theta}}}(\boldsymbol{k})\left[\mathbbm{1}_{j>0}(\boldsymbol{k})-\mathbbm{1}_{j=0}(\boldsymbol{k})\right]\hat{\pi}({\rm{d}}s,{\rm{d}}\boldsymbol{k}),\\
    ={}&N_r^{\sigma,\theta}-\int_{(r,t]}\frac{N_s^{\sigma,\theta}(N_s^{\sigma,\theta}-1+\theta-\sigma)}{2}\,{\rm{d}}s\\
    &{}-\int_{(r,t]}\int_{\bar{\Delta}}\mathbbm{1}_{\bar{\Delta}_{N_{s-}^{\sigma,\theta}}}(\boldsymbol{k})\left[\mathbbm{1}_{j>0}(\boldsymbol{k})-\mathbbm{1}_{j=0}(\boldsymbol{k})\right]\hat{\pi}({\rm{d}}s,{\rm{d}}\boldsymbol{k}).
  \end{align*}
 Since all jump times are isolated and countable then this representation is permissible. From integration by parts we also get that
  \[tN_t^{\sigma,\theta}=rN_r^{\sigma,\theta}+\int_{(r,t]}N_s^{\sigma,\theta}\,{\rm{d}}s+\int_{(r,t]}s\,{\rm{d}}N_s^{\sigma,\theta}.\]
  Combining these we obtain
  \begin{align}
    \frac{tN_t^{\sigma,\theta}}{2}=&\frac{rN_r^{\sigma,\theta}}{2}+\int_{(r,t]}\left(\frac{N_s^{\sigma,\theta}}{2}-s\frac{N_s^{\sigma,\theta}(N_s^{\sigma,\theta}-1+\theta-\sigma)}{4}\right)\,{\rm{d}}s\nonumber\\
    &\hspace{2cm}-\int_{(r,t]}\int_{\bar{\Delta}}\frac{s}{2}\mathbbm{1}_{\bar{\Delta}_{N_{s-}^{\sigma,\theta}}}(\boldsymbol{k})\left[\mathbbm{1}_{j>0}(\boldsymbol{k})-\mathbbm{1}_{j=0}(\boldsymbol{k})\right]\hat{\pi}({\rm{d}}s,{\rm{d}}\boldsymbol{k}).\label{Partial tNt representation, with r}
  \end{align}
  Next we need to check that one can formally let $r=0$ in the above expression, recognise that the final term is then equal to $M^{\sigma,\theta}_t$, and rearrange the drift term to reflect the formula in \eqref{New representation of tNt/2}.
  
  Starting with the final term in \eqref{Partial tNt representation, with r}, we first need to fix $T\geq0$ and use \eqref{ASG E sup limit k} to find that, for $t\leq T$
  \begin{equation}
      \E\left[\sup_{s\leq t}\left(\frac{sN_s^{\sigma,\theta}}{2}\right)^k\right]<\infty.\label{ASG E sup k bound}
  \end{equation}
  We can then use properties of the compensated Poisson integral to bound the second moment of the martingale \eqref{MartingaleFormula}:
  \begin{align}
    \E[(M^{\sigma,\theta}_t)^2]&=\E\left[\int_0^t\int_{\bar{\Delta}}\frac{s^2}{4}\mathbbm{1}_{\bar{\Delta}_{N^{\sigma,\theta}_{s-}}}(\boldsymbol{k})\nu({\rm{d}}s,{\rm{d}}\boldsymbol{k})\right]\nonumber\\
    &=\E\left[\int_0^ts^2\frac{N_s^{\sigma,\theta}(N_s^{\sigma,\theta}-1+\theta+\sigma)}{8}\,{\rm{d}}s\right]\nonumber\\
    &\leq\E\left[\int_0^t s^2\frac{(N_s^{\sigma,\theta})^2}{8}+Ts\frac{(\theta+\sigma)N_s^{\sigma,\theta}}{8}\,{\rm{d}}s\right]\label{MgL2Bound part1}\\
    &\leq C_2t,\qquad t\leq T, \label{MgL2Bound}
  \end{align}
  where $C_2\geq0$ comes from \eqref{ASG E sup k bound} with $k=1,2$.
  
  It should be noted that the constant above (and most of the constants that follow) have an implicit dependence on $T$; though since we are concerned with behaviour near zero this dependence is unimportant.
  Thanks to \cite[Theorem 8.23]{peszat_zabczyk_2007} and \eqref{MgL2Bound} we get that $M^{\sigma,\theta}$ given by \eqref{MartingaleFormula} is a well defined square integrable martingale. Moreover, Doob's $L^2$ maximal inequality gives us the following bound:
  \begin{equation}
    \E\left[\sup_{s\leq t}(M^{\sigma,\theta}_s)^2\right]\leq 4C_2t,\qquad\forall t\leq T.\label{MgSupL2Bound}
  \end{equation}
  We note that the final term in \eqref{Partial tNt representation, with r} is equal to $M^{\sigma,\theta}_t-M^{\sigma,\theta}_r$ and by \eqref{MgSupL2Bound} we have $M^{\sigma,\theta}_r\to 0$ in $L^2$ as $r\to0$.
  
  Next, we rewrite the integral with respect to $s$ in \eqref{Partial tNt representation, with r} as
  \begin{align}
    A^{\sigma,\theta}_r(t)&:=\frac{1}{2}\int_{(r,t]}N^{\sigma,\theta}_s\,{\rm{d}}s-\frac{1}{2}\int_{(r,t]}s\frac{N_s^{\sigma,\theta}(N_s^{\sigma,\theta}-1+\theta-\sigma)}{2}\,{\rm{d}}s\nonumber\\
    &\phantom{:}=-\int_{(r,t]}\frac{N_s^{\sigma,\theta}}{2}\left(\frac{s}{2}N_s^{\sigma,\theta}-1\right)\,{\rm{d}}s+\int_{(r,t]}\frac{s(1-\theta+\sigma)}{4}N_s^{\sigma,\theta}\,{\rm{d}}s.\label{ArDef}
  \end{align}
  This allows us to rearrange \eqref{Partial tNt representation, with r} into the following:
  \begin{align*}
    \frac{t}{2}N_t^{\sigma,\theta}-1+\int_{(r,t]}\frac{N_s^{\sigma,\theta}}{2}\left(\frac{s}{2}N_s^{\sigma,\theta}-1\right)\,{\rm{d}}s={}&\frac{r}{2}N_r^{\sigma,\theta}-1+\int_{(r,t]}\frac{(1-\theta+\sigma)s}{4}N_s^{\sigma,\theta}\,{\rm{d}}s\\
    &{}-(M^{\sigma,\theta}_t-M^{\sigma,\theta}_r).
  \end{align*}
  Applying \eqref{TechnicalLemma} with $f(s)=s N_s^{\sigma,\theta}/2-1$, $g(s)=N_s^{\sigma,\theta}(s N_s^{\sigma,\theta}/2-1)/2$, $a=r$, and $b=t$ we find that
  \begin{align*}
    \sup_{r\leq s\leq t}\left|\frac{s}{2}N_s^{\sigma,\theta}-1\right|\leq{}& 2\Bigg(\left|\frac{r}{2}N_r^{\sigma,\theta}-1\right|+|M^{\sigma,\theta}_r|\\
    {}&+\sup_{r\leq s\leq t}|M^{\sigma,\theta}_s|+\int_{(r,t]}\left|\frac{(1-\theta+\sigma)s}{4}N_s^\theta\right|\,{\rm{d}}s\Bigg).
  \end{align*}
  Letting $r\to0$ in the above and using \eqref{ASG Speed of CDI} gives us
  \begin{equation}
    \sup_{s\leq t}\left|\frac{s}{2}N_s^{\sigma,\theta}-1\right|\leq 2\left(\sup_{s\leq t}|M^{\sigma,\theta}_s|+\int_0^t\left|\frac{(1-\theta+\sigma)s}{4}N_s^{\sigma,\theta}\right|\,{\rm{d}}s\right),\label{Improving sup convergence}
  \end{equation}
  where letting $r=0$ in the integral on the right hand side is permissible thanks to \eqref{ASG E sup k bound} with $k=1$. Squaring both sides of \eqref{Improving sup convergence}, applying expectations and using \eqref{MgSupL2Bound} and \eqref{ASG E sup k bound} gives us that there exists $C_3>0$ such that
  \begin{equation}
    \E\left[\sup_{s\leq t}\left(\frac{sN_s^{\sigma,\theta}}{2}-1\right)^2\right]\leq C_3t,\qquad\forall t\leq T.\label{New ASG Sup squared linear bound}
  \end{equation}
  We can now use this bound, which improves on \eqref{ASG E sup limit k}, to ensure the integral term in \eqref{New representation of tNt/2} is well-defined. Letting $X^{\sigma,\theta}_t:=X_1^{\sigma,\theta}(t)=tN_t^{\sigma,\theta}/2-1$, \eqref{New ASG Sup squared linear bound} (along with Jensen's inequality) allows us to control $X_t^{\sigma,\theta}$ as follows:
  \begin{equation}
    -\sqrt{C_3t}\leq\E\left[X_t^{\sigma,\theta}\right]\leq\sqrt{C_3t},\label{SqrtProcessBound}
  \end{equation}
  for $t\leq T$. If we now consider the integral term in \eqref{New representation of tNt/2} we see that by \eqref{SqrtProcessBound}
  \begin{align*}
    \E\left[\int_0^tX^{\sigma,\theta}_s\frac{1}{s}\,{\rm{d}}s\right]=\int_0^t\E\left[X^{\sigma,\theta}_s\right]\frac{1}{s}\,{\rm{d}}s
    \leq\int_0^t \frac{\sqrt{C_3}}{\sqrt{s}}\,{\rm{d}}s
    =2\sqrt{C_3t}.
  \end{align*}
  This shows that the integral with respect to $s$ in \eqref{New representation of tNt/2} has finite expectation and thus is finite a.s.~and so is well-defined.
  
  Next, we can express $A^{\sigma,\theta}_r(t)$ as
  \begin{align*}
      A^{\sigma,\theta}_r(t)={}&-\int_{(r,t]}\left(\frac{s}{2}N_s^{\sigma,\theta}-1\right)^2\frac{1}{s}\,{\rm{d}}s-\int_{(r,t]}\left(\frac{s}{2}N_s^{\sigma,\theta}-1\right)\frac{1}{s}\,{\rm{d}}s\\
      &{}+\frac{1}{2}\int_{(r,t]}\frac{(1-\theta+\sigma)s}{2}N_s^{\sigma,\theta}\,{\rm{d}}s.
  \end{align*}
    Using \eqref{New ASG Sup squared linear bound} along with \eqref{ASG E sup k bound} we find that
    \[\E\left[\Big|A^{\sigma,\theta}_r(t)+\int_{(r,t]}\left(\frac{s}{2}N_s^{\sigma,\theta}-1\right)\frac{1}{s}\,{\rm{d}}s\Big|\right]\leq C_4t,\qquad\forall t\leq T,\]
    where $C_4>0$ does not depend on $r$. This shows that, as $r\to0$, $A^{\sigma,\theta}_r(t)$ converges in $L^1$ to
    \[-\int_0^t\left(\frac{sN_s^{\sigma,\theta}}{2}-1\right)\frac{1}{s}\,{\rm{d}}s+R^{\sigma,\theta}_t,\]
    where
   \begin{equation}
     R^{\sigma,\theta}_t:=-\int_0^t\left(\frac{s}{2}N_s^{\sigma,\theta}-1\right)^2\frac{1}{s}\,{\rm{d}}s+\frac{1}{2}\int_0^t\frac{(1-\theta+\sigma)s}{2}N_s^{\sigma,\theta}\,{\rm{d}}s.\label{RtDef}
   \end{equation}
  To conclude, \eqref{New ASG Sup squared linear bound} applied to the first term on the right hand side of \eqref{RtDef} and \eqref{ASG E sup k bound} applied to the second term yield the bound in \eqref{RProcessBound}.
\end{proof}
Next, we will reduce the problem of convergence of \eqref{ASG 2nd order pre limit} to convergence of the following process defined via the martingale \eqref{MartingaleFormula}:
\begin{align}
    Y_t^{\sigma,\theta}&:=-\frac{1}{t}\int_0^tu\,{\rm{d}}M_u^{\sigma,\theta},~~t>0,~~Y_0^{\sigma,\theta}=0,\label{Y def}\\
    Y_\epsilon^{\sigma,\theta}(t)&:=\epsilon^{-1/2}Y_{\epsilon t}^{\sigma,\theta}.\label{Y epsilon def}
\end{align}
\begin{lemma}\label{lemma5.3}
The process $(Y_t^{\sigma,\theta})_{t\geq0}$ satisfies the equation
\begin{equation}
    Y_t^{\sigma,\theta}=-\int_0^tY_s^{\sigma,\theta}\frac{1}{s}\,{\rm{d}}s-M_t^{\sigma,\theta}.\label{Y integral representation}
\end{equation}
Moreover, there exists $C_5>0$ such that for any $t\leq T$
\begin{equation}
    \E\left[\sup_{s\leq t}\left(Y_s^{\sigma,\theta}\right)^2\right]\leq C_5t.\label{Y sup squared control}
\end{equation}
Finally, we have that
\begin{equation}
    \lim_{\epsilon\to0}\E\left[\sup_{s\leq t}\left|X_\epsilon^{\sigma,\theta}(s)-Y_\epsilon^{\sigma,\theta}(s)\right|\right]=0.\label{sup X-Y limit}
\end{equation}
\end{lemma}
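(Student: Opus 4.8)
The plan is to handle the three assertions in turn, using stochastic integration by parts for \eqref{Y integral representation} and the technical lemma \eqref{TechnicalLemma} for \eqref{Y sup squared control} and \eqref{sup X-Y limit}. For the integral representation, write $I_t:=\int_0^t u\,{\rm d}M_u^{\sigma,\theta}$, so that $Y_t^{\sigma,\theta}=-I_t/t$ by \eqref{Y def}. The product rule gives ${\rm d}(t^{-1}I_t)={\rm d}M_t^{\sigma,\theta}-t^{-2}I_t\,{\rm d}t$, hence $t^{-1}I_t=M_t^{\sigma,\theta}-\int_0^t s^{-2}I_s\,{\rm d}s$ once the boundary term $r^{-1}I_r$ is shown to vanish as $r\to0$. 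Since $s^{-2}I_s=-Y_s^{\sigma,\theta}/s$, rearranging yields \eqref{Y integral representation}. To control the boundary term I would use the It\^o isometry for the compensated Poisson integral, as in the computation of \eqref{MgL2Bound part1}, giving $\E[I_r^2]=\frac18\E\big[\int_0^r u^4\,N_u^{\sigma,\theta}(N_u^{\sigma,\theta}-1+\theta+\sigma)\,{\rm d}u\big]$; bounding $u^4(N_u^{\sigma,\theta})^2\leq 16u^2\sup_{u\leq r}(uN_u^{\sigma,\theta}/2)^2$ and applying \eqref{ASG E sup k bound} shows $\E[I_r^2]=O(r^3)$, so that $r^{-1}I_r\to0$ in $L^2$. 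The same estimate gives the pointwise bound $\E[(Y_u^{\sigma,\theta})^2]=O(u)$, which will be convenient below.

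For \eqref{Y sup squared control} I would feed \eqref{Y integral representation} into \eqref{TechnicalLemma}. On an interval $[r,t]$, rewrite \eqref{Y integral representation} as $Y_s^{\sigma,\theta}+\int_r^s Y_u^{\sigma,\theta}u^{-1}\,{\rm d}u=-M_s^{\sigma,\theta}-\int_0^r Y_u^{\sigma,\theta}u^{-1}\,{\rm d}u$ and take $f(s)=Y_s^{\sigma,\theta}$, $g(s)=Y_s^{\sigma,\theta}/s$. The sign hypothesis is automatic, since $f(s)g(s)=(Y_s^{\sigma,\theta})^2/s>0$ whenever $Y_s^{\sigma,\theta}\neq0$, so \eqref{TechnicalLemma} gives $\sup_{r\leq s\leq t}|Y_s^{\sigma,\theta}|\leq 2(\sup_{r\leq s\leq t}|M_s^{\sigma,\theta}|+|\int_0^r Y_u^{\sigma,\theta}u^{-1}\,{\rm d}u|)$. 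Letting $r\to0$, the pointwise bound $\E[(Y_u^{\sigma,\theta})^2]=O(u)$ yields $\E[\int_0^t|Y_u^{\sigma,\theta}|u^{-1}\,{\rm d}u]<\infty$, so the boundary integral vanishes a.s.\ and $\sup_{s\leq t}|Y_s^{\sigma,\theta}|\leq 2\sup_{s\leq t}|M_s^{\sigma,\theta}|$. Squaring, taking expectations and invoking \eqref{MgSupL2Bound} then gives \eqref{Y sup squared control} with $C_5=16C_2$.

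The convergence \eqref{sup X-Y limit} is where these estimates combine. Rearranging \eqref{New representation of tNt/2} shows that $X_t^{\sigma,\theta}=tN_t^{\sigma,\theta}/2-1$ satisfies $X_t^{\sigma,\theta}=-\int_0^t X_s^{\sigma,\theta}s^{-1}\,{\rm d}s-M_t^{\sigma,\theta}+R_t^{\sigma,\theta}$; subtracting \eqref{Y integral representation} and setting $D_t:=X_t^{\sigma,\theta}-Y_t^{\sigma,\theta}$ gives $D_t+\int_0^t D_s s^{-1}\,{\rm d}s=R_t^{\sigma,\theta}$. A second application of \eqref{TechnicalLemma}, with $f(s)=D_s$ and $g(s)=D_s/s$ and the sign condition $D_s^2/s\geq0$ again automatic, produces $\sup_{s\leq t}|D_s|\leq 2\sup_{s\leq t}|R_s^{\sigma,\theta}|$, so \eqref{RProcessBound} gives $\E[\sup_{s\leq t}|D_s|]\leq 2C_1 t$ for $t\leq T$. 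Finally, since $X_\epsilon^{\sigma,\theta}(s)-Y_\epsilon^{\sigma,\theta}(s)=\epsilon^{-1/2}D_{\epsilon s}$, for $\epsilon t\leq T$ we obtain
\[\E\Big[\sup_{s\leq t}\big|X_\epsilon^{\sigma,\theta}(s)-Y_\epsilon^{\sigma,\theta}(s)\big|\Big]=\epsilon^{-1/2}\E\Big[\sup_{u\leq\epsilon t}|D_u|\Big]\leq\epsilon^{-1/2}\cdot 2C_1\epsilon t=2C_1 t\sqrt{\epsilon},\]
which tends to $0$ as $\epsilon\to0$, establishing \eqref{sup X-Y limit}.

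The main obstacle I anticipate is the careful treatment of the origin: confirming that the boundary term in the integration by parts vanishes, that the singular integrals $\int_0^{\,\cdot}(\cdot)\,s^{-1}\,{\rm d}s$ are a.s.\ finite, and that \eqref{TechnicalLemma} may be applied on $[r,t]$ and passed to the limit $r\to0$. The structural feature that makes all three parts go through is the sign condition in \eqref{TechnicalLemma}: in each application the product $fg$ is a manifestly nonnegative multiple of $1/s$, so the hypothesis holds with no further work, and the c\`adl\`ag (indeed continuous) regularity of the integrands is easily checked. Once this is in place, the rescaling producing the decisive factor $\sqrt{\epsilon}$ in the last display is immediate.
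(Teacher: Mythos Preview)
Your proposal is correct and follows essentially the same route as the paper: the paper also introduces $L_t^{\sigma,\theta}=tY_t^{\sigma,\theta}$ (your $-I_t$), bounds its quadratic variation by $\tfrac{C_2}{3}t^3$ via the same computation you sketch, obtains \eqref{Y integral representation} by integration by parts on $(r,t]$ and passing $r\to0$, and then applies the technical lemma \eqref{TechnicalLemma} first with $f=Y^{\sigma,\theta}$, $g=Y^{\sigma,\theta}/s$ to get \eqref{Y sup squared control}, and again with $f=g\cdot s=X^{\sigma,\theta}-Y^{\sigma,\theta}$ together with \eqref{RProcessBound} to obtain the $2C_1t\sqrt{\epsilon}$ bound for \eqref{sup X-Y limit}. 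The only cosmetic difference is that the paper first passes to the limit $r\to0$ in the integral representation and then invokes \eqref{TechnicalLemma} on $[0,t]$, whereas you apply \eqref{TechnicalLemma} on $[r,t]$ and then send $r\to0$; the justification and outcome are the same.
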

\begin{proof}
  First we consider $L^{\sigma,\theta}_t:=tY^{\sigma,\theta}_t$ so that $L^{\sigma,\theta}$ is a square integrable martingale with quadratic variation
  \[\left[L^{\sigma,\theta}\right]_t=\frac{1}{4}\int_0^t\int_{\bar{\Delta}} s^4\mathbbm{1}_{\bar{\Delta}_{N_{s-}^{\sigma,\theta}}} (\boldsymbol{k})\pi({\rm{d}}s,{\rm{d}}\boldsymbol{k}),\]
  which has expectation
  \begin{align}
    \E \left[[L^{\sigma,\theta}]_t\right] &=\frac{1}{4}\E\left[\int_0^t\int_{\bar{\Delta}}s^4\mathbbm{1}_{\bar{\Delta}_{N_{s-}^{\sigma,\theta}}} (\boldsymbol{k})\nu({\rm{d}}s,{\rm{d}}\boldsymbol{k})\right]\nonumber\\
    &=\frac{1}{4}\E\left[\int_0^ts^2\left(s^2\frac{N_s^{\sigma,\theta}(N_s^{\sigma,\theta}-1+\theta+\sigma)}{2}\right)\,{\rm{d}}s\right]\label{Ltcalc2ndline}\\
    &\leq C_2\int_0^ts^2\,{\rm{d}}s=\frac{C_2}{3}t^3,\label{LtQuadvarIneq}
  \end{align}
  where we use the same bound from \eqref{MgL2Bound} on the expectation of the bracketed term in \eqref{Ltcalc2ndline}.
  
  This now gives us that
  \begin{equation}
    \E \left[\left(Y^{\sigma,\theta}_t\right)^2\right]=\frac{1}{t^2}\E \left[\left(L^{\sigma,\theta}_t\right)^2\right]=\frac{1}{t^2}\E \left[[L^{\sigma,\theta}]_t\right]\leq\frac{C_2}{3}t.\label{Y second moment bound}
  \end{equation}
  In order to obtain \eqref{Y integral representation} we first use integration by parts to write
  \begin{align}
    Y^{\sigma,\theta}_t-Y^{\sigma,\theta}_r&=\int_{(r,t]}\frac{1}{s^2}\int_0^s u\,{\rm{d}}M^{\sigma,\theta}_u\,{\rm{d}}s-\int_{(r,t]} \,{\rm{d}}M^{\sigma,\theta}_s \nonumber\\
    &=-\int_{(r,t]}\frac{1}{s}Y^{\sigma,\theta}_s\,{\rm{d}}s-M^{\sigma,\theta}_t+M^{\sigma,\theta}_r,\label{2.18}
  \end{align}
  and let $r\to0$; using \eqref{Y second moment bound} and Jensen's inequality to bound $\E[\int_{(r,t]}|Y^{\sigma,\theta}_s|/s\,{\rm{d}}s]$ uniformly in $r>0$ by $2\sqrt{Ct}/\sqrt{3}$, ensuring that $\int_0^tY^{\sigma,\theta}_s/s\,{\rm{d}}s$ exists almost surely.
  
  Now, we apply \eqref{TechnicalLemma} with $f(s):=Y^{\sigma,\theta}_s$, $g(s):=Y^{\sigma,\theta}_s/s$, $a=0$ and $b=t$ to get that
  \[\sup_{s\leq t}|Y^{\sigma,\theta}_s|\leq 2\sup_{s\leq t}M^{\sigma,\theta}_s.\]
  We can then square both sides and apply expectations along with \eqref{MgSupL2Bound} to obtain \eqref{Y sup squared control}.
  
  To prove \eqref{sup X-Y limit} we first recall that $X^{\sigma,\theta}_t=t N_t^\theta/2-1$ and so by Lemma \ref{lemmaXdecomposition} and \eqref{Y integral representation},
  \begin{equation}
    X^{\sigma,\theta}_t-Y^{\sigma,\theta}_t=-\int_0^t(X^{\sigma,\theta}_s-Y^{\sigma,\theta}_s)\frac{1}{s}\,{\rm{d}}s+R^\theta_t.\label{XminusYFormula}
  \end{equation}
  Applying \eqref{TechnicalLemma} to the above we find that
  \[\sup_{s\leq t}|X^{\sigma,\theta}_s-Y^{\sigma,\theta}_s|\leq 2\sup_{s\leq t}|R^{\sigma,\theta}_s|.\]
  This leads to
  \begin{align*}
    \E\left[\sup_{s\leq t}\left|X^{\sigma,\theta}_\epsilon(s)-Y^{\sigma,\theta}_\epsilon(s)\right|\right]&=\epsilon^{-1/2}\E\left[\sup_{s\leq t}\left|X^{\sigma,\theta}(\epsilon s)-Y^{\sigma,\theta}(\epsilon s)\right|\right]\\
    &\leq 2\epsilon^{-1/2}\E\left[\sup_{s\leq t}|R^{\sigma,\theta}_{\epsilon s}|\right]\\
    &\leq 2C_1\sqrt{\epsilon}t,
  \end{align*}
  using \eqref{RProcessBound}. Letting $\epsilon\to0$ in the above gives us \eqref{sup X-Y limit}.
\end{proof}
We now have everything in place to prove Theorem \ref{maintheorem}. The proof consists of first checking the conditions of \cite[Chapter 7, Theorem 1.4]{EK} are satisfied for the process
\[L^{\sigma,\theta}_\epsilon(t):=-tY^{\sigma,\theta}_\epsilon(t)=\frac{1}{\epsilon^{3/2}}\int_0^{\epsilon t}u\,{\rm{d}}M^{\sigma,\theta}_u,\qquad t\geq 0,\]
and proving its convergence to $(tZ_t)_{t\geq0}$ as $\epsilon\to0$. Then, as in \cite{limic2015}, we take advantage of Steps 2--4 in the proof of \cite[Lemma 4.8]{limic2015two} to extend this to the convergence of $Y_\epsilon^{\sigma,\theta}$ to $Z$ as $\epsilon\to0$. Essentially, the continuity of $t\mapsto 1/t$ away from zero along with the control we have over these processes near zero is what ensures this convergence. Finally, \eqref{sup X-Y limit} then gives us the convergence of \eqref{ASG 2nd order pre limit} to \eqref{Gaussian Process Limit for ASG + Kingman}.
\begin{proof}[Proof of Theorem \ref{maintheorem}]
  Starting then with $L_\epsilon^{\sigma,\theta}$, we first note that since $L^{\sigma,\theta}_\epsilon(t)=-\epsilon^{-3/2}L^{\sigma,\theta}(\epsilon t)$, then $L^{\sigma,\theta}_\epsilon$ is an $L^2$-martingale of the form
  \begin{equation}
    L^{\sigma,\theta}_\epsilon(t)=\epsilon^{-3/2}\frac{1}{2}\int_0^{\epsilon t}\int_{\bar{\Delta}}s^2\mathbbm{1}_{\bar{\Delta}_{N_{s-}^{\sigma,\theta}}}(\boldsymbol{k})\left[\mathbbm{1}_{j>0}(\boldsymbol{k})-\mathbbm{1}_{j=0}(\boldsymbol{k})\right]\hat{\pi}({\rm{d}}s,{\rm{d}}\boldsymbol{k}).\label{LepsilonFormula}
  \end{equation}
  The compensator of the square of this process is given by
  \begin{align}
    \langle L^{\sigma,\theta}_\epsilon\rangle_t&=\frac{1}{4\epsilon^3}\int_0^{\epsilon t}\int_{\bar{\Delta}}s^4\mathbbm{1}_{\bar{\Delta}_{N_{s-}^{\sigma,\theta}}}(\boldsymbol{k})\nu({\rm{d}}s,{\rm{d}}\boldsymbol{k}),\nonumber\\
    &=\frac{1}{4\epsilon^3}\int_0^{\epsilon t}s^4\frac{N_s^{\sigma,\theta}(N_s^{\sigma,\theta}-1+\theta+\sigma)}{2}\,{\rm{d}}s\nonumber\\
    &=\frac{1}{2}\int_0^ts^2(\epsilon s)^2\frac{N_{\epsilon s}^{\sigma,\theta}(N_{\epsilon s}^{\sigma,\theta}-1+\theta+\sigma)}{4}\,{\rm{d}}s.\label{LbracketEqn}
  \end{align}
   We now wish to verify that the assumptions (b) in \cite[Theorem 1.4, Chapter 7]{EK} are satisfied with $M_n$ corresponding to $L^{\sigma,\theta}_\epsilon$, $A_n$ corresponding to $\langle L^{\sigma,\theta}_\epsilon\rangle$ and $C_t=\int_0^tu^2\,{\rm{d}}u/2$.
  
  First, we note that since $\langle L^{\sigma,\theta}_\epsilon\rangle$ is continuous we only need to show (i) that $\langle L^{\sigma,\theta}_\epsilon\rangle_t$ converges as $\epsilon\to 0$ to $\int_0^tu^2/2\,{\rm{d}}u$ in probability for each $t>0$, and (ii) that for any $T>0$,
  \begin{equation}
    \lim_{\epsilon\to0}\E\left[\sup_{t\leq T}|L^{\sigma,\theta}_\epsilon(t)-L^{\sigma,\theta}_\epsilon(t-)|^2\right]=0.\label{Lepsilonjumpcontrol}
  \end{equation}
  For the first claim we consider the expected value of \eqref{LbracketEqn} as $\epsilon\to0$. Since the integrand is non-negative we can use Tonelli's theorem to switch the order of integration:
  \begin{align*}
    &\lim_{\epsilon\to0}\E\left[\frac{1}{2}\int_0^ts^2(\epsilon s)^2\frac{N_{\epsilon s}^{\sigma,\theta}(N_{\epsilon s}^{\sigma,\theta}-1+\theta+\sigma)}{4}\,{\rm{d}}s\right]\\
    &{}=\lim_{\epsilon\to0}\frac{1}{2}\int_0^ts^2\E\left[(\epsilon s)^2\frac{N_{\epsilon s}^{\sigma,\theta}(N_{\epsilon s}^{\sigma,\theta}-1+\theta+\sigma)}{4}\right]\,{\rm{d}}s.
  \end{align*}
  Similarly to \eqref{MgL2Bound}, we can control $\E\left[(\epsilon s)^2N_{\epsilon s}^{\sigma,\theta}(N_{\epsilon s}^{\sigma,\theta}-1+\theta+\sigma)/4\right]$ uniformly in $\epsilon>0$, allowing us to bring the limit inside the integral. Expanding the brackets we now get
  \[\frac{(\epsilon s)^2(N_{\epsilon s}^{\sigma,\theta})^2}{4}+\frac{(\sigma+\theta-1)(\epsilon s)^2N_{\epsilon s}^{\sigma,\theta}}{4}.\]
  The first term converges to 1 and the second term goes to 0 both in $L^1$ thanks to \eqref{ASG E sup limit k}. Thus (\ref{LbracketEqn}) converges to $\int_0^tu^2\,{\rm{d}}u/2$ in probability as $\epsilon\to0$.
  
  To address (ii): the limit in \eqref{Lepsilonjumpcontrol} holds since, thanks to the representation (\ref{LepsilonFormula}), the jumps of $L^{\sigma,\theta}_\epsilon$ on $[0,T]$ are isolated and uniformly bounded by $\epsilon^{-3/2}(\epsilon T)^2/2$ which goes to zero as $\epsilon\to0$. Thus we have the convergence of $L^{\sigma,\theta}_\epsilon\to (tZ_t)_{t\geq0}$ in law in $D_\mathbb{R}[0,\infty)$.
  
  Before concluding the proof one needs the following bound on the limiting process $Z$:
  \[\E\left[\sup_{s\leq t}Z_s^2\right]\leq C_6t,\qquad C_6\geq0.\]
  This bound follows from the formula
  \[Z_t=-\int_0^tZ_s\frac{1}{s}\,{\rm{d}}s+\frac{1}{\sqrt{2}}W_t,\]
  in combination with \eqref{TechnicalLemma}, similarly to the proof of \eqref{Y sup squared control}.
  
  The rest of the proof uses Steps 2--4 of \cite[Lemma 4.8]{limic2015two} to show that convergence of $L^{\sigma,\theta}_\epsilon$ implies the required convergence of $X_\epsilon^{\sigma,\theta}$; see also the proof of Theorem \ref{maintheorem} of \cite{limic2015}.
\end{proof}
\section*{Acknowledgements}
This work was supported by the EPSRC as well as the MASDOC DTC under grant EP/HO23364/1, by the Alan Turing Institute under the EPSRC grant EP/N510129/1, and by the EPSRC under grant EP/R044732/1.

%Research of PJ is supported by The Alan Turing Institute under the EPSRC grant EP/N510129/1.
%JK was supported by EPSRC grant EP/R044732/1.
%Commented this out as I didn't want to delete it, my suggestion as to what to write here is basically the same as what Jaro had in his paper. Paul I don't know if you want to state separately that it specifically was you who was supported by the Alan Turing Institute, I leave this to your discretion.

\bibliography{Bib}
\bibliographystyle{ieeetr}

\end{document}